\newtheorem{theorem}{Theorem}[section]
\newtheorem{thm}[theorem]{Theorem}
\newtheorem{lemma}[theorem]{Lemma}
\newtheorem{prop}[theorem]{Proposition}
\newtheorem{cor}[theorem]{Corollary}
\theoremstyle{definition}
\newtheorem{defn}[theorem]{Definition}
\theoremstyle{remark}
\numberwithin{equation}{section}
\DeclareMathOperator{\fchar}{char}
\DeclareMathOperator{\Mass}{Mass}
\DeclareMathOperator{\img}{img}
\DeclareMathOperator{\Emb}{Emb}
\newcommand{\absv}[1]{\lvert #1\rvert}
\newcommand{\Lsymb}[2]{\genfrac{(}{)}{}{}{#1}{#2}}  
\newcommand{\qalg}[3]{\left(\frac{#1, #2}{#3}\right)}
 \newcommand{\nc}{\newcommand}
\def\makeop#1{\expandafter\def\csname#1\endcsname
  {\mathop{\rm #1}\nolimits}\ignorespaces}
\def\makebb#1{\expandafter\def
  \csname bb#1\endcsname{{\mathbb{#1}}}\ignorespaces}
\def\makebf#1{\expandafter\def\csname bf#1\endcsname{{\bf
      #1}}\ignorespaces} 
\def\makegr#1{\expandafter\def
  \csname gr#1\endcsname{{\mathfrak{#1}}}\ignorespaces}
\def\makescr#1{\expandafter\def
  \csname scr#1\endcsname{{\EuScript{#1}}}\ignorespaces}
\def\makecal#1{\expandafter\def\csname cal#1\endcsname{{\mathcal
      #1}}\ignorespaces} 
\def\doLetters#1{#1A #1B #1C #1D #1E #1F #1G #1H #1I #1J #1K #1L #1M
                 #1N #1O #1P #1Q #1R #1S #1T #1U #1V #1W #1X #1Y #1Z}
\def\doletters#1{#1a #1b #1c #1d #1e #1f #1g #1h #1i #1j #1k #1l #1m
                 #1n #1o #1p #1q #1r #1s #1t #1u #1v #1w #1x #1y #1z}
    \def\setminus{\smallsetminus}
\def\Gm{{{\bbG}_{\rm m}}}   
\def\Fpbar{\overline{\bbF}_p}
\def\Fp{{\bbF}_p}
\def\Fq{{\bbF}_q}
\def\Qp{{\bbQ}_p}
\def\Zp{{\bbZ}_p}
\newcommand{\Z}{\mathbb Z}
\newcommand{\Q}{\mathbb Q}
\newcommand{\R}{\mathbb R}
\newcommand{\D}{\mathbf D}    
\newcommand{\A}{\mathbb A}    
\renewcommand{\O}{\mathcal O} 
\newcommand{\F}{\mathbb F}
\newcommand{\npr}{\noindent }
\newcommand{\pr}{\indent }
\newcommand{\oneone}{\mbox{$\longleftrightarrow$}}  
\newcommand{\isoto}{\stackrel{\sim}{\longrightarrow}}
\nc{\embed}{\hookrightarrow}
\newcommand{\ch}{characteristic }
\newcommand{\ac}{algebraically closed }
\newcommand{\dieu}{Dieudonn\'{e} }
\nc{\ol}{\overline}
\nc{\wt}{\widetilde}
\nc{\opp}{\mathrm{opp}}
\def\ul{\underline}
\def\wh{\widehat}
\begin{document}
\numberwithin{equation}{section}


\title[Counting abelian varieties]
{On counting certain abelian varieties  
over finite fields}
\author{Jiangwei Xue}

\address{(Xue) Collaborative Innovation Center of Mathematics, School of
  Mathematics and Statistics, Wuhan University, Luojiashan, 430072,
  Wuhan, Hubei, P.R. China}   
\email{xue\_j@whu.edu.cn}

\author{Chia-Fu Yu}

\address{
(Yu) Institute of Mathematics, Academia Sinica and NCTS \\
6th Floor, Astronomy Mathematics Building \\
No. 1, Roosevelt Rd. Sec. 4 \\ 
Taipei, Taiwan, 10617} 
\email{chiafu@math.sinica.edu.tw}


\date{\today}
\subjclass[2010]{11R52,11R58.} 
\keywords{mass formula, abelian varieties over finite fields}  

\begin{abstract}
This paper contains two parts toward studying abelian varieties
from the classification point of view.  
In a series of papers \cite{xue-yang-yu:num_inv,xue-yang-yu:ECNF,
  xue-yang-yu:sp_as,xue-yang-yu:sp_as2}, the current authors and
T.-C. Yang obtain explicit formulas for the numbers of 
superspecial abelian surfaces over
finite fields. 
In this paper, we give an explicit formula for the size of the 
isogeny class of
simple abelian surfaces with real Weil number $\sqrt{q}$. 
This establishes a key step that one may extend our previous 
explicit calculations 
of superspecial abelian surfaces to those of 
 supersingular abelian surfaces.
The second part is to introduce the notion of genera and
idealcomplexes of abelian varieties with
additional structures in a general setting. 
The purpose is to generalize the results of \cite{yu:smf} 
on abelian varieties with additional structures to similitude 
classes, which establishes more results on the connection between 
geometrically defined and
arithmetically defined masses for further investigation.  

    
\end{abstract} 

\maketitle

\section{Introduction}
\label{sec:intro}

Throughout this paper, $p$ denotes a prime number 
and $q$ is a power of $p$. 
Recall that an abelian variety over a field $k$ of characteristic $p$ 
is said to be {\it supersingular} if it is
isogenous to a product of supersingular elliptic curves over an
algebraic closure 
$\bar k$ of $k$; it is said to be {\it superspecial} if it is
isomorphic to a product of supersingular elliptic curves over $\bar
k$. It is known that any supersingular abelian variety is isogenous to a
superspecial abelian variety. Thus, studying superspecial abelian
varieties is a vital step for studying supersingular abelian varieties.   

By the work of Deuring \cite{deuring:1941} and 
the new input of Waterhouse \cite{waterhouse:thesis} 
using the elegant theory of Honda and Tate \cite{tate:ht}, 
we have an explicit formula for the size of each isogeny class of
elliptic curves over a finite field $\Fq$ (also see \cite{schoof:1987}). 
The main part of these formulas are discussing
various cases of supersingular elliptic curves. 
It is desire to have similar results for abelian
surfaces over $\Fq$. Abelian surfaces over $\Fq$ are divided into
ordinary, almost ordinary and supersingular ones. As in the case of 
elliptic curves, the classifications 
of ordinary and almost ordinary abelian surfaces are simpler; and the
simple classes have
been studied by Waterhouse \cite{waterhouse:thesis}.  
  
In a series of papers \cite{xue-yang-yu:num_inv,xue-yang-yu:ECNF,
  xue-yang-yu:sp_as,xue-yang-yu:sp_as2}, the current authors and
T.-C. Yang obtain an explicit formula for the number of 
$\Fq$-isomorphism classes of superspecial abelian surfaces in each
  isogeny class over $\Fq$. Our next step is to compute explicitly that
  for each isogeny of supersingular abelian surfaces. 
  These are of course special cases of the general question
  of how to compute explicitly the size of an $\Fq$-isogeny class.

Suppose that $q=p^a$ be a power of a prime number $p$.
For any Weil $q$-number $\pi$, let $\calA_\pi$ be the set of 
$\Fq$-isomorphism classes of
abelian varieties in the $\Fq$-isogeny class associated to $\pi$ 
by the Honda-Tate theorem. Let $A_\pi$ be a member in
$\calA_\pi$, and denote by $E_\pi:=\End^0(A_\pi)
=\End_{\Fq}(A_\pi)\otimes \Q$ 
its endomorphism algebra over $\Fq$,
which depends only on the Weil number $\pi$. In Section 2, we
give a description for $\calA_\pi$ in terms of double coset spaces;
see Theorem~\ref{1}. This can be viewed as a special case of the
simple mass formula \cite[Theorem 2.2]{yu:smf}, and also 
a generalization
of some main results of Waterhouse \cite{waterhouse:thesis}. 
The proof we give
here is more conceptual than that in \cite{yu:smf} as for finite ground
fields we have convenient tools of Tate modules and \dieu modules. 
This method already appears in  \cite{kottwitz:michigan1990} where
Kottwitz expresses the size of each isogeny class in $\calA_g(\Fq)$
in terms of orbital integrals.  
The main difference is that the description given here is in 
terms of a sum of class numbers, for which one may further 
compute their values using computer for each specific case or for some
special cases.    
When the center $F=\Q(\pi)$ is a CM field, we show that
$|\calA_\pi|$ is the sum of certain explicit ray class numbers of $F$
(see Proposition~\ref{2}). For the other case where $F$ is totally real,
we discuss the classification of ``genera'' in details, and one can
apply the generalized Eichler trace formula in \cite{xue-yang-yu:ECNF}
to compute the class numbers of these genera. 
For the reader's convenience, we describe the extended trace formula 
in Section 3.

Our first main result is Theorem~\ref{4.4}, which  gives an 
explicit formula for $|\calA_\pi|$, where $\pi=\pm \sqrt{q}$
with odd  exponent $a$.
Note that the case where the exponent $a$ is even is a classical
result of Deuring and Eichler. So Theorem~\ref{4.4} completes 
the explicit calculation of $|\calA_\pi|$ when
$F$ is totally real. 
The main tool  
is the extended version of Eichler's trace formula we just mention.
We use this to calculate 
the number of \emph{superspecial} members in $\calA_\pi$ 
first. For
computing the non-superspecial members we use the Morel-Bailly
family \cite[Section 1]{katsura-oort:surface}, 
which may be also viewed as a special case of minimal isogenies 
introduced in\footnote{We caution the reader
  that the
  proof of this lemma is incorrect because $a(X_i)>a(X_{i-1})$ may not
  hold even when $a(X_{i-1})<g$. See \cite{yu:note_ss} for
  the existence of the minimal isogeny for an arbitrary (not only
  supersingular) abelian variety.} \cite[Lemma 1.8]{li-oort}.    
In the course of our computation,   
the Drinfeld period domain of rank two 
over $\Fq$ shows up and plays an interesting role.
The method by minimal isogenies paves a way to reduce the 
computation of non-superspecial
part to that of superspecial ones, and we have already established 
explicit formulas for the latter. 
Some of our arguments in the proof applies to other
isogeny classes of supersingular abelian surfaces as well. 
 
The second part of this paper is to discuss analogies between abelian
varieties and lattices in a general framework. Based on this idea of
analogy with lattices, we introduce the notion of genera and
idealcomplexes of abelian varieties with additional structures in a
general setting.   
As a result, we extend some results of \cite{yu:smf} to similitude
classes, and establish more connections between algebraically defined
and geometrically defined masses.  
Though our general result is still rather
abstract, more explicit results may be established under this
framework. As an example, the second author gives an
explicit formula for the number of the superspecial locus in the good
reduction modulo $p$ of a general type C Shimura varieties; 
see \cite{yu:mass_c}. At the end of this paper we give one example as
the application of the main result
(Theorem~\ref{smf.7}).  
  


\section{Isomorphism classes in a simple isogeny class}\label{sec:isom}

In this section we discuss certain structures that may be used in 
the problem of (more) explicit computation of
isomorphism classes in an isogeny class of abelian varieties over
finite fields. As the problem of determining explicitly the size of
the number of isomorphism classes is 
quite difficult at this moment, 
we first consider simple isogeny classes for simplicity.
Theorem~\ref{1} may be viewed as a common generalization of main
results of Waterhouse (see the first part of Theorem 5.1 and Theorems 6.1
and 7.2 of \cite{waterhouse:thesis}). 
The method already appears in \cite{kottwitz:michigan1990} where
Kottwitz expresses the size of each isogeny class in $\calA_g(\Fq)$
in terms of orbital integrals.  
The main difference is that the description given here is in 
terms of a sum of class numbers, for which one may further 
compute the values using computer for each specific case. 
Based on the similar description, 
Lipnowski and Tsimerman \cite[Section 3]{Lip-Tsi} further 
give a nice bound for the
size of any isogeny class over $\Fp$ of arbitrary dimension $g$. 
  
\subsection{}
Let $k$ be a finite field of cardinality $q$, where $q=p^a$ is a power
of a prime number $p$. Let $\pi$ be a Weil $q$-number. By the
Honda-Tate theory \cite{honda,tate:ht}, 
there is a $k$-simple abelian variety $A_\pi$ over $k$,
uniquely determined up to $k$-isogeny, so that the Frobenius endomorphism
of $A_\pi$ over $k$ is 
conjugate to $\pi$. Let
$F:=\Q(\pi)$ and $O_F$ be the ring of integers. 
The field $F$ is either a CM field or a totally real field, 
because it is stable
under a positive involution (Rosati involution). 
Let $\calA_{\pi}$ be the set of $k$-isomorphism classes
of abelian varieties in the $k$-isogeny class of $A_\pi$. 
In this section we
present a method toward computing the cardinality of  $\calA_\pi$. 

Let $E_\pi$ be the endomorphism algebra of $A_\pi$ over $k$. It is a
central division algebra over $F$. The local invariants of $E_\pi$ 
are given by \cite{tate:eav}:
\[ \inv_v(E_\pi)=
\begin{cases}
  \frac{1}{2} & \text{if } v \text{ is real,} \\
   \frac{v(\pi)}{v(q)}[F_v:\Q_p] & \text{if }v|p, \\
   0 & \text{otherwise.}   
\end{cases} \]
Let $[E_\pi:F]=e^2$. Let $P(t)$ be the characteristic polynomial of
$\pi$ on $A_\pi$. By definition
this is the characteristic polynomial of the Frobenius endomorphism 
$\pi$ on
the Tate module $T_\ell(A_\pi)$ for any prime $\ell\neq p$, and
one has $P(t)\in \Z[t]$ . Let $m(t)$
be the minimal polynomial of $\pi$ on $A_\pi$; this is the same as 
that of $\pi$ in $E_\pi$ over $\Q$ and hence it is a irreducible
polynomial in $\Z[t]$. 
We have $P(t)=m(t)^e$. Let $R:=\Z[\pi]=\Z[t]/(m(t))\subset
F$, and for any prime $\ell$ (including $p$), write 
$R_\ell:=R\otimes_{\Z} \Z_\ell$.   
 
For any prime $\ell\neq p$, let $T_\ell(A_\pi)$ denote the Tate module
of $A_\pi$ and put $V_\ell(A_\pi):=T_\ell(A_\pi)\otimes_{\Z_\ell} \Q_\ell$. 
The Galois invariant $\Z_\ell$-lattices in $V_\ell(A_\pi)$ are 
nothing but $R_\ell$-lattices. Note that $V_\ell(A_\pi)$ is a free
$F_\ell$-module of rank $e$, where $F_\ell:=F\otimes_\Q \Q_\ell$. 
Define 
\[  \wt \grX_{\pi,\ell}:=\left  \{ \text{$R_\ell$-lattices in
    $V_\ell(A_\pi)$  
} \right \}, \quad \text{and}\quad \grX_{\pi,\ell}:=\wt
\grX_{\pi,\ell}/\simeq, \] 
where for any two $R_\ell$-lattices $L_1$ and $L_2$ in 
$\wt \grX_{\pi,\ell}$ we write $L_1\simeq L_2$ if they are isomorphic
as $R_\ell$-modules.  
Since the order $R_\ell$ is maximal for almost all primes $\ell$, the set
$\grX_{\pi,\ell}$ is a singleton for almost all primes $\ell$. 
 
At the prime $p$, let $M(A_\pi)$ denote the (covariant) \dieu module
of $A_\pi$ and let $N(A_\pi):=M(A_\pi)\otimes_{\Z_p} \Q_p$ be the
associated ($F$-)isocrystal. Define 
\[\wt \grX_{\pi,p}:=\left \{\text{Dieudonn\'e submodules in
    $N(A_\pi)$ of full rank} \right \}, \quad \text{and}\quad
\grX_{\pi,p}:=\wt \grX_{\pi,p} /\simeq, \]
where for any two \dieu submodules  $M_1$ and $M_2$ in 
$\wt \grX_{\pi,p}$ we write $M_1\simeq M_2$ if they are isomorphic
as \dieu modules.
Taking the product over all primes, we obtain a finite set
$ \grX_{\pi}:=\grX_{\pi,p}\times \prod_{\ell\neq p} \grX_{\pi,\ell}. $
The association to each abelian variety $A$ over $k$ its \dieu
module and all Tate modules defines a map $\Phi: \calA_\pi \to
\grX_{\pi}$.  

Recall that any quasi-isogeny $\varphi:A_1\to A_2$ of abelian
varieties over $k$ is an element 
$\varphi\in \Hom_k(A_1,A_2)\otimes_{\Z}\Q$ such that $N\varphi$ is an
isogeny for some integer $N$.  
Let us put 
\begin{gather*}
\wt \grX_{\pi}:=\wt \grX_{\pi,p}\times {\prod_{\ell\neq p}}' 
\, \wt \grX_{\pi,\ell}=\{ (M,(T_\ell)_{\ell\neq p})\,\mid \,
T_\ell=T_\ell(A_\pi), \forall\,' \ell\}, \quad \text{and} \\
\wt \calA_{\pi}:=\left \{ \text{quasi-isogenies $\varphi:A\to
    A_\pi$ over $k$} \right \}. 
\end{gather*}
Two quasi-isogenies $\varphi_1:A_1\to A_\pi$ and $\varphi_2:A_2\to
A_\pi$ over $k$ are regarded as the same element in $\wt
\calA_{\pi}$ if there exists a $k$-isomorphism $\alpha: 
A_1\to A_2$ such that $\varphi_2 \circ \alpha=\varphi_1$. 
Define the map $\wt \Phi: \wt \calA_{\pi}\to \wt \grX_\pi$ by 
\[ \wt \Phi(\varphi:A\to A_\pi):=\left (\varphi_*(M(A)),
\varphi_*(T_\ell(A))_{\ell\neq p}\, \right ), \]
which is a bijection due to a theorem of Tate \cite{tate:eav}. 
Then we have a natural commutative diagram:
\def\pr{{\rm pr}}
\begin{equation}
  \label{eq:diagram}
\begin{CD}
  \wt \calA_{\pi}@>{\wt \Phi}>{\sim}> \wt \grX_\pi \\
   @V{\pr_\calA}VV @VV{\pr_\grX}V \\
   \calA_{\pi} @>{\Phi}>>  \grX_\pi,
\end{CD}  
\end{equation}
where the vertical maps are natural surjective maps. It follows that
the map $\Phi:\calA_\pi\to \grX_{\pi}$ is also surjective. We consider the
fibers of this map.  

Let $G_\pi$ be the algebraic group over $\Q$ associated to the 
multiplicative group $E_{\pi}^\times$. By Tate's homomorphism theorem
on abelian varieties, one has canonical isomorphisms
\begin{equation}
  \label{eq:8}
 G_\pi(\Q_\ell)=\Aut_{F_\ell}(V_\ell(A_\pi)), \quad \text{and} \quad
G_{\pi}(\Q_p)=\Aut_{\rm DM}(N(A_\pi)). 
\end{equation}
Let $\A_f$ be the finite adele ring of $\Q$, and
$ X=([M], [T_\ell]_{\ell\neq p})$ an element of $\grX_{\pi}$.  Define
a compact subgroup $U_X\subset G_\pi(\A_f)$ by
\[ U_X:=\Aut_{\rm DM} (M) \times \prod_{\ell\neq p}
\Aut_{R_\ell}(T_\ell); \]
this is uniquely determined by $X$ up to conjugation by
$G_\pi(\A_f)$.  
The group
$G_\pi(\A_f)$ admits 
a natural left action on the set $\wt \grX_\pi$ of \dieu
and Tate modules. It also acts on $\wt A_{\pi}$ from the left through
the isomorphisms (\ref{eq:8}) (cf. Lemma~\ref{smf.2})
so that the map $\wt \Phi$ is $G_\pi(\A_f)$-equivariant.   
Denote by $\calA_{\pi,X}$ and $\wt \calA_{\pi,X}$ the fibers over $X$ in
$\calA_\pi$ and $\wt \calA_{\pi}$, respectively. The set $\wt
\calA_{\pi,X}$ is a single $G_\pi(\A_f)$-orbit and it is isomorphic to
$G_\pi(\A_f)/U_X$ after choosing a base point. The projection map
$\pr_\calA$ is simply modulo the left action of
$G_\pi(\Q)$. Therefore, we obtain a bijection
\[ \calA_{\pi,X} \isoto G_\pi (\Q)\backslash G_\pi(\A_f)/U_X. \]
Running over all elements $X$ in $\grX_{\pi}$, we obtain 
the following result.


\begin{thm}\label{1}
There is a bijection 
  \begin{equation}
    \label{eq:thm1}
   \calA_\pi \isoto \coprod_{X\in \grX_{\pi}} 
   G_\pi (\Q)\backslash G_\pi(\A_f)/U_X. 
  \end{equation}
\end{thm}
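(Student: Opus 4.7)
The plan is to verify the commutative diagram~\eqref{eq:diagram} fiber by fiber: first establish that the top arrow $\wt\Phi$ is a $G_\pi(\A_f)$-equivariant bijection, then identify each fiber of the right vertical map with a coset space $G_\pi(\A_f)/U_X$, and finally take the quotient by $G_\pi(\Q)$ along the left vertical map to obtain $\calA_{\pi,X}$. Assembling over all $X\in\grX_\pi$ will produce~\eqref{eq:thm1}.

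The first step is to check that $\wt\Phi$ is a bijection, which is essentially Tate's theorem on homomorphisms~\cite{tate:eav}: at each $\ell\neq p$ the functor $A\mapsto V_\ell(A)$ with its Galois action is fully faithful on the $k$-isogeny class, and at $p$ the analogous statement holds for the isocrystal $N(A)$. Given any datum $(M,(T_\ell)_{\ell\neq p})\in\wt\grX_\pi$, these equivalences produce a unique quasi-isogeny $\varphi\colon A\to A_\pi$ whose pushforwards recover $M$ and each $T_\ell$; the restricted-product formulation is needed and suffices because $\varphi$ is an integral isomorphism of lattices at all but finitely many primes.

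Next, I would observe that $G_\pi(\A_f)$ acts on $\wt\calA_\pi$ by post-composition $g\cdot\varphi:=g\circ\varphi$, viewing $g\in E_\pi^\times\otimes\A_f$ as a quasi-automorphism of $A_\pi$; under the identifications~\eqref{eq:8} this matches the natural action on lattices and Dieudonn\'e submodules, so $\wt\Phi$ is equivariant. For any $X\in\grX_\pi$ the fiber of $\pr_\grX$ over $X$ is a single $G_\pi(\A_f)$-orbit (two lattices/Dieudonn\'e modules are isomorphic iff they are $G_\pi(\A_f)$-conjugate), and the stabilizer of any chosen base point is by definition $U_X$; hence $\wt\calA_{\pi,X}\isoto G_\pi(\A_f)/U_X$.

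Finally, the map $\pr_\calA$ forgets the structure map $\varphi$ and records only the isomorphism class $[A]$. Two quasi-isogenies $\varphi_i\colon A_i\to A_\pi$ have $k$-isomorphic sources exactly when they differ by an element of $E_\pi^\times=G_\pi(\Q)$ acting by post-composition on $A_\pi$ (translated back through $\varphi$), so $\pr_\calA$ realizes the quotient by the left diagonal action of $G_\pi(\Q)$. Combining the two passages yields $\calA_{\pi,X}\isoto G_\pi(\Q)\backslash G_\pi(\A_f)/U_X$, and disjoint union over $X\in\grX_\pi$ completes the proof. The main subtlety is verifying that Tate's local theorems assemble into the global bijection $\wt\Phi$ with the correct restricted-product structure and that the three actions (of $G_\pi(\Q)$, $G_\pi(\A_f)$, and $U_X$) are mutually compatible; once this is clear, the conclusion is a standard orbit--stabilizer computation.
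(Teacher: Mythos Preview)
Your proposal is correct and follows essentially the same route as the paper: the argument preceding Theorem~\ref{1} establishes exactly the diagram~\eqref{eq:diagram}, invokes Tate's theorem to show $\wt\Phi$ is a $G_\pi(\A_f)$-equivariant bijection, identifies each fiber $\wt\calA_{\pi,X}$ with $G_\pi(\A_f)/U_X$ via orbit--stabilizer, and then quotients by $G_\pi(\Q)$ to obtain $\calA_{\pi,X}$. Your additional remarks on the restricted-product structure and the compatibility of the three actions make explicit what the paper leaves implicit, but the logical skeleton is the same.
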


For each $X=([M], [T_\ell]_{\ell\neq p})\in \grX_\pi$, there is an
$\Z$-order $O_X$ in $E_\pi$ such that $O_X\otimes \Zp\simeq
\End_{\rm 
  DM}(M)$ and $O_X\otimes \Z_\ell\simeq \End_{R_\ell}(T_\ell)$ for
all $\ell \neq p$. We may choose the orders 
$O_X$ so that they are contained
in a common maximal order $O_{\max}$, because any two maximal orders
are locally conjugate. Let $\Cl(O_X)$ denote the set of
isomorphism classes of locally free right $O_X$-ideals in $E_\pi$,
 and $h(O_X):=|\Cl(O_X)|$ the class number of $O_X$. 
We have $\Cl(O_X)\simeq G_\pi (\Q)\backslash
G_\pi(\A_f)/U_X$. Theorem~\ref{1} then gives
\begin{equation}
  \label{eq:12}
  |\calA_\pi|=\sum_{X\in \grX_\pi} h(O_X).
\end{equation}
Thus, to compute $|\calA_\pi|$ one needs to 
\begin{itemize}
\item[(i)] Classify members in $\grX_\pi$; 
\item[(ii)] Compute the order $O_X$ for each $X\in \grX_\pi$. This is
  again a local computation;
\item[(iii)] Compute the class number $h(O_X)$ for each $X\in
  \grX_\pi)$.      
\end{itemize}

The first step is the most complicate part. For 
$\grX_{\pi,\ell}$, this is to classify $R_\ell$-lattices in
$F_\ell^e$. The most generality of this problem is very difficult. 
We refer to the fundamental paper of Dade, Taussky and
Zassenhaus~\cite{dade-taussky-Zassenhaus} 
and subsequent work for detailed studies. 
For $\grX_{\pi,p}$, one can identify $\grX_{\pi,p}$ with the set of 
$\Fq$-points in a Rapoport-Zink space, which could be used to estimate
the size of $\grX_{\pi,p}$. 
Once the first step is done, the second step is more or less
straightforward, because one can compute $O_X$ by 
Tate's theorem for homomorphisms of abelian varieties over finite
fields. The calculation of $h(O_X)$ differs 
whether or not the endomorphism 
algebra $E_\pi$ satisfies the Eichler condition. For our case, 
this is the case if and only if $F$ is a CM field. If the Eichler
condition holds for $E_\pi$, 
then the computation of $h(O_X)$ is much simpler by
a result of Jacobinski \cite[Theorem 2.2]{jacobinski:acta1968}; 
see Section~\ref{sec:2.2} for a simplified argument using Galois 
cohomology.

Recall that a central simple algebra $\grA$ over a number field $K$
satisfies the Eichler condition (respect to the Archimedean places) if
the group of reduced norm one $(\grA\otimes_{\Q}
\R)^\times_1$ is not compact. If $\grA$ does not satisfy the
Eichler condition, then $K$ must be totally real and $\grA$ is a
totally definite quaternion $K$-algebra. In this case, the standard
tool for computing $h(O_X)$ is Eichler's trace formula. Note that the
order $O_X$ appearing in the content of abelian varieties may
not be an $O_F$-order. Thus, the Eichler trace formula developed in 
\cite{korner:1987} for an arbitrary $O_F$-order is not 
sufficiently general for computing $h(O_X)$.  
In~\cite{xue-yang-yu:ECNF} we generalize the Eichler trace formula for an
{\it arbitrary $\Z$-order} in any totally definite quaternion over a
totally real field. For the reader's convenience, we describe this
extended formula in Section~\ref{sec:gener-eichl-trace}.


  
\subsection{}\label{sec:2.2}
Assume that the center $F$ of $E_\pi$ has no real place, i.e. $\pi$
 is not $\pm \sqrt{q}$.
Let $\Nr: G_\pi \to \Res_{F/\Q} \Gm$ be the reduced norm map with kernel
$G_{\pi,1}$, and $\wh O_F$ the profinite completion of $O_F$. Define
\[ N(\pi):=\sum_{X\in \grX_{\pi}} n_X, \quad \text{where}\quad  n_X:=[\wh O_F^{\times} : O_F^\times  \Nr(U_X)]. \]


\begin{prop}\label{2}
  Assume that the field $F=\Q(\pi)$ has no real place. Then 
  $ |\calA_{\pi}|=N(\pi) h(F)$, 
  where $h(F)$ denotes the class number of the number field $F$. 
\end{prop}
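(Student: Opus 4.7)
The plan is to combine Theorem~\ref{1} with strong approximation for the norm-one subgroup and a class-field-theoretic index count on the torus $T := \Res_{F/\Q}\Gm$. By formula \eqref{eq:12}, it suffices to show that for each $X\in \grX_\pi$,
\[
h(O_X) = |G_\pi(\Q) \backslash G_\pi(\A_f) / U_X| = n_X\, h(F);
\]
summing over $X\in\grX_\pi$ then yields $|\calA_\pi| = N(\pi) h(F)$.

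The first main step is to push the double coset down along the reduced norm $\Nr\colon G_\pi\to T$. Since $F$ has no real place by hypothesis, $F$ is a CM field; at each (necessarily complex) Archimedean place $v$ one has $E_\pi\otimes_F F_v\cong M_e(\C)$, so $G_{\pi,1}(\R)$ is isomorphic to a product of copies of $\SL_e(\C)$. For $e\geq 2$ this is non-compact, the Eichler condition holds, and strong approximation for the simply connected semisimple group $G_{\pi,1}$, together with the surjectivity of $\Nr\colon G_\pi(\Q)\to F^\times$ (Hasse--Schilling--Maass, using the absence of real places), gives a bijection
\[
\Nr\colon G_\pi(\Q)\backslash G_\pi(\A_f)/U_X \isoto F^\times \backslash \A_{F,f}^\times / \Nr(U_X).
\]
When $e=1$ one has $E_\pi=F$, $G_\pi=T$, and the bijection is tautological.

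The second step is the index computation on the right. Consider the natural surjection
\[
F^\times \backslash \A_{F,f}^\times / \Nr(U_X) \twoheadrightarrow F^\times \backslash \A_{F,f}^\times / \wh O_F^\times = \Cl(F),
\]
well defined because $\Nr(U_X) \subset \wh O_F^\times$ (units preserving $R_\ell$-lattices and Dieudonn\'e submodules have integral unit norms). By the modular law in the abelian group $\A_{F,f}^\times$, and using $F^\times \cap \wh O_F^\times = O_F^\times$, the fiber above the trivial class is
\[
\wh O_F^\times /(O_F^\times\,\Nr(U_X)),
\]
of cardinality $n_X$. Since $\A_{F,f}^\times$ is abelian, translation identifies every fiber with this one, and the cardinality of the right-hand side is $n_X\, h(F)$.

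The main technical point is the strong approximation step, where one needs the Eichler condition; the hypothesis that $F$ has no real place is exactly what forces this (no totally definite quaternion factors at infinity), and the degenerate case $e=1$ must be handled by hand since then the norm-one group is trivial and strong approximation is vacuous. The remaining fiber count is elementary once one invokes the modular law.
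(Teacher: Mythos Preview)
Your argument is correct and follows essentially the same route as the paper: reduce via Theorem~\ref{1} to a double coset on $G_\pi$, push it down along $\Nr$ using strong approximation for $G_{\pi,1}$ and Hasse--Schilling--Maass, then compute the index over $\Cl(F)$ via $\wh O_F^\times/(O_F^\times\Nr(U_X))$. The only cosmetic differences are that the paper phrases the last step as a short exact sequence rather than a fiber count, and explicitly invokes Kneser's theorem to get $\Nr(G_\pi(\A_f))=\A_{F,f}^\times$ (you implicitly use local surjectivity of the reduced norm at finite places but do not name it); conversely, you separate out the degenerate case $e=1$, which the paper leaves implicit.
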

\begin{proof}  
  Since the group 
  $G_{\pi_,1}$ is semi-simple and simply-connected, 
  we 
  have $H^1(\Qp,G_{\pi,1})=1$ for all primes $p$ by Kneser's Theorem 
  \cite[Theorem 6.4, p.~284]{platonov-rapinchuk}.  
  It then follows that $\Nr(G_\pi(\A_f))=\A_{F,f}^\times$.
  On the other hand, since $F$ has no real place, the Lie group
  $G_{\pi,1}(\R)$ is not compact and strong approximation holds for
  $G_{\pi,1}$. It then follows (see \cite[Lemma 14]{yu:swan} for the
  argument) that 
the reduced norm map 
\[ \Nr:
 G_\pi (\Q)\backslash G_\pi(\A_f)/U_X \isoto  \Nr(G(\Q)) \backslash
 \A_{F,f}^{\times} /\Nr(U_X) \]
is bijective. By the Hasse-Schilling-Maass norm theorem
\cite[Theorem~33.15]{reiner:mo} that every element in $F^\times$ is a
 norm if and only if it is a local norm everywhere, 
one gets $\Nr(G(\Q))=F^\times$ by Kneser's Theorem again. 
This proves 
\[ |G_\pi (\Q)\backslash
 G_\pi(\A_f)/U_X|=|\A_{F,f}^{\times}/ F^\times\cdot \Nr(U_X)|. \]  
On the other hand, consider the short exact sequence
\[ 1\longrightarrow \frac{\wh O_F^\times}{(\wh O_F^\times\cap F^\times
 \Nr(U_X))}
 \longrightarrow
 \frac{\A_{F,f}^{\times}}{F^\times \Nr(U_X)} 
\longrightarrow \Pic(O_F) \longrightarrow 1. \]   
It is easy to check 
$\wh O_F^\times\cap F^\times \Nr(U_X)=O_F^\times \Nr(U_X)$. 
Thus, we have $|G_\pi (\Q)\backslash G_\pi(\A_f)/U_X|=n_X h(F)$. 
The proposition then follows from Theorem~\ref{1}. 
\end{proof}

\subsection{An example} Take $\pi=\sqrt{-p}$. We have
$E_\pi=F=\Q(\sqrt{-p})$ and $R=\Z[\sqrt{-p}]$. The corresponding
abelian variety $A_\pi$ is a supersingular elliptic curve over
$\Fp$. The $F$-isocrystal $N(A_\pi)$ is a free $F_p$-module of rank
one, and \dieu modules in $N(A_\pi)$ are simply $R_p$-submodules. Since
$R_p$ is a maximal order, $\grX_{\pi,p}$ has one element.

When $\ell\neq 2$ or $p\equiv 1
\pmod 4$, the order $R_\ell$ is maximal and hence 
the set $\grX_{\pi,\ell}$ has one element. Therefore, when $p\equiv 1
\pmod 4$ the set $\grX_\pi$ consists of one element $X$ with $n_X=1$,
and $N(\pi)=1$. 

Suppose that  $p\equiv 3 \pmod 4$. The set $\grX_{\pi,2}$
consists of two elements:
\[ T_2\simeq R_2,\quad \text{or} \quad  T_2\simeq O_{F,2}. \]
Therefore, the $\grX_{\pi}$ has two elements $X_1$ and $X_2$,
corresponding to the two elements in $\grX_{\pi,2}$ above.   
We have $n_{X_2}=1$ and 
$n_{X_1}=[\wh O_F^{\times}:O_F^\times
\wh R^\times]$. One computes that \cite{yu:sp-prime}
\[ [\wh O_E^\times:O_E^\times \wh R^\times]=
\begin{cases}
  1, & \text{if $p\equiv 7 \pmod 8$ or $p=3$},\\
  3, & \text{if $p\equiv 3 \pmod 8$ and $p\neq 3$}. 
\end{cases} \]
Therefore, $|\calA_\pi|=N(\pi) h(\Q(\sqrt{-p}))$, where
\[ N(\pi)=
\begin{cases}
  1, & \text{if $p\equiv 1 \pmod 4$ or $p=2$},\\
  2, & \text{if $p\equiv 7 \pmod 8$ or $p=3$},\\
  4, & \text{if $p\equiv 3 \pmod 8$ and $p\neq 3$}. 
\end{cases} \]
In \cite{schoof:1987} one finds more formulas for numbers of elliptic
curves in an isogeny class over $\Fq$. 

\subsection{}
Proposition~\ref{2} basically says that when $F$ has no real place, the
computation of $|\calA_\pi|$ is reduced to the problem of
computing the class numbers of certain orders $R_X$ in $F$ 
for each $X\in \grX_\pi$, and we have shown the relation 
$h(R_X)=n_X h(F)$. Let $\wt n_X:=[\hat O_F^\times, \Nr(U_X)]$, then we
have $\wt n_{X}=\prod_{\ell} \wt n_{X_\ell}$, where
$\wt n_{X_\ell}:=[\hat O_{F_\ell}^\times: \Nr(U_{X,\ell})]$. 
Put $\wt N_\ell(\pi):=\sum_{X_\ell\in \grX_{\pi,\ell}} \wt n_{X_\ell}$
and $\wt N(\pi):=\sum_{X\in \grX_{\pi}} \wt n_{X}$. Then we have an
upper bound which can be computed locally
\begin{equation}
  \label{eq:2.1}
  N(\pi)\le \wt N(\pi) \quad\ \text{and}\ \wt N(\pi)=\prod_{\ell} 
\wt N_{\ell}(\pi). 
\end{equation}


We make a few remarks on the classification of $\grX_{\pi,\ell}$,
i.e.~classifying $R_\ell$-modules in a free $F_\ell$-module of rank
$n$. 

First observe that $R=\Z[\pi]$ is a complete intersection and
Gorenstein. It might be possible to study $R_\ell$-modules through
(co)-homological algebra. 

In some special cases, it is possible to describe the finite set
$\grX_{\pi,\ell}$ more explicitly. For example if $R[1/p]$ is a Bass
($\Z[1/p]$-)order, then one has a more explicit description of 
$R_\ell$-modules (see \cite[Section 37]{curtis-reiner:1} for the
definition of Bass orders).
In this case any $R_\ell$-module $T_\ell$ in
$F_\ell^n$ is isomorphic to $R_1\oplus R_2\oplus \dots \oplus R_n$ 
for orders $R_1\subset \dots \subset R_n$ containing $R_\ell$ in
$F_\ell$, and   
 the set $\{R_1,\dots, R_n\}$ of orders with multiplicities 
is completely determined by $T_\ell$. 

Examples of Bass orders include Dedekind domains and quadratic orders
over a Dedekind domain. Bass orders share the local property in the
sense that an order is Bass if and only if so are all its
completions. 

We illustrate the idea by an example. Assume that $F=\Q(\pi)$ is an
imaginary quadratic field and let $n=\dim A_\pi$. For any integer
$d\ge 1$, denote by $R_d$  the order in $F$ of conductor $d$. 
Suppose the conductor of $R=\Z[\pi]$ is $q_1 D$,
where $q_1$ is a $p$-power and $(p,D)=1$. For any $\Z$-lattice $L$
write $\wh L^{(p)}:=L\otimes \wh \Z^{(p)}$ with
$\Z^{(p)}=\prod_{\ell\neq  p}\Z_\ell$. 
Then any 
$\wh R^{(p)}$-module  $T^{(p)}$ of rank $n$ is isomorphic
to $\wh R_{d_1}^{(p)} \oplus \wh R_{d_2}^{(p)} \oplus \dots \oplus
\wh R_{d_n}^{(p)}$ for uniquely determined divisors $d_1,\dots, d_n$ of 
$D$ with $d_n|d_{n-1}|\dots |d_1$.  One computes directly that $\Nr(\End(T^{(p)}))=\wh R_{d_n}^{(p)}$. 
Suppose that we
have found the representatives $M_1, \dots, M_r$ for
$\grX_{\pi,p}$. Then $\Nr(\End_{\rm DM}(M_i))=(R_{p^{a_i}})_p$ for
some non-negative integers $a_i$. The cardinality of $\calA_\pi$ is
given by
\begin{equation}
  \label{eq:14}
  |\calA_{\pi}|=\sum_{i=1}^r \sum_{d_1,\dots, d_n} h(R_{p^{a_i}
   d_n}), 
\end{equation}
where $d_i$'s run over the positive divisors of $D$ with the condition
$d_n|d_{n-1}|\dots |d_1$.

\subsection{}\label{subsec:sqrt-q} Assume that $F$ has a real place. Then $\pi=\pm
  \sqrt{p^a}$, $F=\Q(\sqrt{p^a})$ and $R=\Z[\sqrt{p^a}]$. We separate
  the discussion into two cases depending on the parity of
  $a$. 
  
When $a$ is even, $F=\Q$ and $E_\pi$ is isomorphic to the quaternion
  $\Q$-algebra 
  $D_{p, \infty}$ ramified exactly at $\{p, \infty\}$. The set $\grX_\pi$
  consists of one element $X$ and the corresponding group $U_X$ is
  maximal in $G_{\pi}(\A_f)$. Thus, $|\calA_\pi|$ is the class
  number of a maximal order of $D_{p,\infty}$. 
  The class number formula for $D_{p,\infty}$ 
  due to Deuring, Eichler and Igusa gives
\[ 
|\calA_{\pi}|=\frac{p-1}{12}
+\frac{1}{3}\left (1-\left (\frac{-3}{p}\right)\right)
+\frac{1}{4}\left(1-\left(\frac{-4}{p}\right)\right).
\] \

Now consider the case where $a$ is odd. 
The endomorphism algebra $E_\pi$ is isomorphic to the quaternion algebra 
$D_{\infty_1,\infty_2}$ over $F=\Q(\sqrt{p})$ ramified exactly at the two
Archimedean places $\{\infty_1,\infty_2\}$. Moreover, the corresponding
abelian variety $A_\pi$ is a supersingular abelian surface over
$\Fq$. 



Note that $[O_F:R]$
is a divisor of $2p^{(a-1)/2}$.
So for $\ell\nmid
2p$, the ring $R_\ell$ is the maximal order and
$\grX_{\pi,\ell}$
has one element $T_\ell$
whose endomorphism ring is isomorphic to $\Mat_2(O_{F,\ell})$, where
$O_{F, \ell}:=O_F\otimes \Z_\ell$. 
For an odd prime $p$, the projection gives natural identification
$\grX_\pi=\grX_{\pi,p}\times
\grX_{\pi,2}$. Similarly, if $p=2$, then $\grX_\pi=\grX_{\pi,p}$.

  When $p\equiv 3 \pmod 4$, the order $R_2$ is maximal and
  hence $\grX_{\pi,2}$ has one element whose endomorphism ring is
  $\Mat_2(O_{F,2})$.  When
  $p\equiv 1 \pmod 4$, the set $\grX_{\pi, 2}$ has three elements
  $[L_1]$, $[L_2]$, $[L_4]$ where
\[ L_1=O_{F,2}^{\oplus 2}, \quad L_2=R_2\oplus O_{F,2}, \quad
\text{and} \quad L_4=R_2^{\oplus 2}. \]
The corresponding endomorphism rings are $\End_{R_2}(L_1)= 
\Mat_2(O_{F,2})$,
\[  \End_{R_2}(L_2)=
\begin{pmatrix}
  R_2 & 2O_{F,2} \\
  O_{F,2} & O_{F,2} 
\end{pmatrix}, \quad \text{and}\quad \End_{R_2}(L_4)=\Mat(R_2). \]
The latter two rings have index 8 and 16 respectively in
$\Mat_2(O_{F,2})$.
Let us fix a maximal ring $\bbO_1\subseteq D_{\infty_1, \infty_2}$ and
identify $\bbO_1\otimes \Z_2$ with $\End_{R_2}(L_1)$. If
$p\equiv 1\pmod{4}$, then we define $\bbO_8\subset \bbO_1$ to be the
unique suborder of index $8$ such that
$\bbO_8\otimes \Z_2=\End_{R_2}(L_2)\subset\End_{R_2}(L_1)$, and
$\bbO_{16}\subseteq \bbO_1$ to be the unique suborder of index $16$
such that $\bbO_8\otimes \Z_2=\End_{R_2}(L_4)$.

We partition set $\grX_{\pi,p}$ into disjoint subsets
$\grX_{\pi,p}^1\sqcup \grX_{\pi,p}^2$ according to the
$\mathbf{a}$-number of its members (see
\cite[Section 1.5]{li-oort}), where
$\grX_{\pi,p}^i:=\{[M]\in \grX_{\pi,p}\mid \mathbf{a}(M)=i\}$ for
$i=1, 2$. Accordingly, $\grX_\pi=\grX_\pi^1\sqcup \grX_\pi^2$ and
$\calA_\pi=\calA_\pi^1\sqcup \calA_\pi^2$.  The subset $\calA_\pi^2$
is none other than the set of \textit{superspecial} abelian surfaces
up to isomorphism in $\calA_\pi$ (Section~1.7, ibid.), so we also
denoted it by $\Sp(\pi)$.  Since $a$ is odd, $\grX_{\pi,p}^2$ is a
singleton $\{[M_0]\}$, with endomorphism ring $\End_{\rm DM}(M_0)$
isomorphic to the maximal order $\Mat_2(O_{F,p})$. More explicitly,
$M_0=R_p^{\oplus 2}\otimes_{\Z_p} W(\F_q)$, and the Frobenius map acts
as $\sqrt{p}\sigma$, where $\sigma$ is the Frobenius morphism of
the ring of Witt vectors $W(\F_q)$. 

Let $[A]$ be a member of $\Sp(\pi)$.  If $p=2$ or $p\equiv 3\pmod{4}$,
then $\End_{\F_q}(A)$ is  maximal in $D_{\infty_1, \infty_2}$. If
$p\equiv 1\pmod{4}$, then $\End(A)$ is locally isomorphic to $\bbO_1$,
$\bbO_8$ or $\bbO_{16}$, depending on the component of $\Phi([A])$
in $\grX_{\pi,2}$. We conclude that 
\begin{equation}\label{eq:10}
\absv{\Sp(\pi)}=
\begin{cases}
  h(\bbO_1) &\quad \text{if } p=2 \text{ or } p\equiv 3\pmod{4},\\
  h(\bbO_1)+h(\bbO_8)+h(\bbO_{16}) &\quad \text{if } p\equiv 1\pmod{4}.\\
\end{cases}
\end{equation}
The cardinality of $\Sp(\pi)$ is explicitly calculated in
\cite{xue-yang-yu:sp_as}, and that of $\calA_\pi^1$ will be calculated
in Section~\ref{sec:ssing-abel-vari}.

\section{Eichler's trace formula for Brandt Matrices}
\label{sec:gener-eichl-trace}

In this section we extend the classical notion of Brandt matrices
\cite[Exercise~III.5.8]{vigneras} to arbitrary $\bbZ$-orders in
totally definite quaternion algebras and provide a trace formula for
them. As a result, we obtain a class number formula for all such
$\bbZ$-orders. Details of the proofs may be found in
\cite{xue-yang-yu:ECNF}. 


Throughout this Section, $F$ denotes a totally real number field,
$A\subseteq O_F$ a $\bbZ$-order in $F$, and $D$ a totally definite
quaternion $F$-algebra. A $\bbZ$-order $\calO\subset D$ is said to be
a \textit{proper} $A$-order if $\calO\cap F=A$.  Similarly, we define
the notion of proper $A$-orders in finite field extensions
$K/F$.  For any $A$-lattice $I\subset D$, the norm $\Nr_A(I)$ of $I$
over $A$ is definite to be the $A$-submodule of $F$ generated by the
reduced norms of elements of $I$. 
If the multiplication $IJ$ of two $A$-lattices $I$ and $J$ is
\textit{coherent} \cite[Section~I.4]{vigneras} and one of $I$ and $J$
is locally principal with respect to its associated left (or right) order, then
$\Nr_A(IJ)=\Nr_A(I)\Nr_A(J)$.

When $I=\calO$, the norm $\Nr_A(\calO)$ is an $A$-order in $F$,
denoted by $\wt A$. It is known that $\wt{A}=A$ if and only if $\calO$
is closed under the canonical involution $x\mapsto \Tr(x)-x$
(\cite[Lemma~3.1.1]{xue-yang-yu:ECNF}).  If $I$ is a locally principal
right $\calO$-ideal, then $\Nr_A(I)$ is a locally principal
$\wt A$-ideal.  Let $\Cl(\calO)$ be the set of isomorphism classes of
locally principal right $\calO$-ideals in $D$, and
$h=h(\calO)=\lvert\Cl(\calO)\rvert$ the class number of $\calO$. We
fix a complete set of representatives $I_1,\dots, I_h$ for $\Cl(\calO)$, and set
\begin{equation}
  \label{eq:34}
 \calO_i:=\calO_l(I_i),\qquad  w_i:=[\calO_i^\times: A^\times]. 
\end{equation}
Each $\calO_i$ is a proper $A$-order uniquely determined up to
$D^\times$-conjugation, and $w_i$ depends only on the ideal class of
$I_i$. Note that $\Nr_A(\calO_i)=\wt A$  for all $1\leq i\leq h$. 
  The \textit{mass} of $\calO$ is defined as the weighted sum
\[ \Mass(\calO):=\sum_{i=1}^h\frac{1}{[\calO_i^\times
  :A^\times]}=\sum_{i=1}^h \frac{1}{w_i}. \]

\begin{defn}\label{defn:brandt-matrix}
  Let $\grn$ be a locally principal integral $\wt A$-ideal.  The
  \textit{Brandt matrix associated to $\grn$} is defined to be the
  matrix
$\grB(\grn):=(\grB_{ij}(\grn))\in \Mat_h(\bbZ),  $
where $\grB_{ij}(\grn)$ is the cardinality of the set of right $\calO_j^\times$-orbits of elements
  $b\in I_i I_j^{-1}$ such that $\Nr_A(b\calO_j)=\grn \Nr_A(I_i)\Nr_A(I_j)^{-1}$.
\end{defn}


It is clear from the Definition~\ref{defn:brandt-matrix} that
$\grB_{ii}(\grn)\neq 0$ only if $\grn$ is principal and generated by a
totally positive element. So for the rest of this section we assume
that $\grn=\wt{A}\beta$ is generated by a totally positive element
$\beta\in \wt{A}$. Moreover, we define the symbol
\begin{equation}
\label{eq:2}
 \delta_{\grn}=
\begin{cases}
  1 & \text{if $\grn=\wt{A}a^2$ for some $a\in A$}; \\
  0 & \text{otherwise.}
\end{cases}
\end{equation}

A finite $A$-order (i.e. a finite $A$-algebra with no $\Z$-torsion)
$B$ is said to be a \textit{CM proper $A$-order} if
$K:=B\otimes_\bbZ\bbQ$ is a CM-extension of $F$
(\cite[Section~13]{Conner-Hurrelbrink}), and $B$ is a proper $A$-order
in $K$. We set
$\delta(B)=1$ if $B$ is closed under the complex conjugation of $K/F$,
and $\delta(B)=0$ otherwise.  Denote by $\Emb(B,\calO)$ the set of
optimal $A$-embeddings from $B$ into $\calO$, i.e.,
\[\Emb(B,\calO):=\{\varphi\in \Hom_F(K, D)\mid \varphi(K)\cap \calO=\varphi(B)\}.\]
This is a finite set equipped with a right action of
$\calO^\times$ sending $\varphi\mapsto g^{-1}\varphi g$ for all
$\varphi\in \Emb(B, \calO)$ and $g\in \calO^\times$.
We denote
\begin{equation}
  \label{eq:3}
m(B,\calO,\calO^\times):= \absv{
 \Emb(B,\calO)/\calO^\times}, \quad \text{and}\quad
w(B):=[B^\times: A^\times].
\end{equation}
Similarly for each prime $p$, we put
\begin{equation}
  \label{eq:4}
m_p(B):=m(B_p,\calO_p,\calO_p^\times)= \absv{
  \Emb(B_p,\calO_p)/\calO_p^\times},  
\end{equation}
where $B_p$ and $\calO_p$ denote the $p$-adic completions
$B\otimes \bbZ_p$ and $\calO\otimes \bbZ_p$, respectively. Note that
$m_p(B)=1$ for all but finitely many $p$. 
Choose a complete set
$S=\{\epsilon_1,\dots, \epsilon_s\}$ of representatives for the finite
group $\wt A^\times_+/(A^\times)^2$, where $\wt A^\times_+$ denotes
the subgroup of totally positive elements in $\wt A^\times$. Let $T_{B,\grn}\subset B$ be the finite set
  \begin{equation}\label{eq:5}
    T_{B, \grn}:=\{x\in B\setminus A\, |\,
  N_{K/F}(x)=\varepsilon\beta \ \ \text{for some $\varepsilon\in S$}\,
  \}. 
  \end{equation}
  For each fixed $\grn$, there are only finitely many CM proper
  $A$-orders $B$ with $T_{B, \grn}\neq
    \emptyset$. 

  \begin{theorem}[Eichler's Trace Formula]\label{thm:trac-brandt-matr}
    Suppose that $\grn=\wt{A}\beta$ is generated by a totally positive
    element $\beta\in \wt{A}$.  Then the trace of the Brandt matrix
    $\grB(\grn)$ is given by
    \[\Tr\grB(\grn)=\delta_\grn \cdot \Mass(\calO)+\frac{1}{4}
    \sum_{B}\frac{(2-\delta(B))h(B)\absv{T_{B,\grn}}}{w(B)} \prod_{p}
    m_p(B), \]
    where $B$ runs over all mutually non-isomorphic CM proper $A$-orders with $T_{B, \grn}\neq
    \emptyset$. 
  \end{theorem}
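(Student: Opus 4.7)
My plan is to follow the classical Eichler trace-formula strategy, adapted to arbitrary $\Z$-orders in $D$. First I unfold
\[ \Tr\grB(\grn) = \sum_{i=1}^{h}\grB_{ii}(\grn), \]
where each diagonal entry counts the right $\calO_i^\times$-orbits in
$S_i := \{b \in \calO_i : \Nr_A(b)\wt{A} = \grn\}$, equivalently the principal right $\calO_i$-ideals of reduced norm $\grn$. I then stratify each $b \in S_i$ by the $F$-subalgebra $F[b] \subset D$: either $F[b] = F$ (forcing $b \in A$), or $F[b] = K$ is a CM quadratic extension of $F$, in which case $B := F[b] \cap \calO_i$ is a CM proper $A$-order in $K$ containing $b$ and the inclusion $B \hookrightarrow \calO_i$ is an optimal $A$-embedding.

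The central contribution, from $b \in A$, is nonzero only when $\grn = \wt{A} a^2$ for some $a \in A$, i.e.\ $\delta_\grn = 1$. A direct analysis of the $\calO_i^\times$-orbits passing through $A$---using that $A^\times \subset \calO_i^\times$ has finite index $w_i$ even though $A^\times$ itself can be infinite for $F \neq \Q$---collapses this contribution into $\delta_\grn \Mass(\calO) = \delta_\grn \sum_i w_i^{-1}$.

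For the non-central contribution, organized by isomorphism classes of CM proper $A$-orders $B$, the key input is the global-to-local identity
\[ \sum_{i=1}^{h}\#\bigl(\Emb(B,\calO_i)/\calO_i^\times\bigr) = h(B)\prod_p m_p(B), \]
which I would establish by double-counting pairs (optimal embedding $\phi \colon B \hookrightarrow \calO_i$, class of the right $\calO_i$-ideal $\phi(B)\calO_i$) against pairs (ideal class of $B$, local optimal embedding data). Multiplying by $|T_{B,\grn}|$ and dividing by the induced symmetries on pairs $(\phi, b_0)$---the $B^\times/A^\times$-action giving the factor $1/w(B)$; the sign involution $b_0 \mapsto -b_0$ giving $1/2$ (since $-\phi(b_0)$ lies in the same $\calO_i^\times$-orbit as $\phi(b_0)$); and, when $\delta(B) = 1$, the internal involution $\phi \leftrightarrow \phi \circ \tau$ on $\Emb(B, \calO_i)$ giving another $1/2$---assembles the prefactor $(2-\delta(B))/(4w(B))$. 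When $\delta(B) = 0$, the conjugate embedding lies in $\Emb(\bar B, \calO_i)$ with $\bar B \neq B$ but $\bar B \cong B$; summing over isomorphism classes already identifies the two, so no further division is required.

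The main obstacle is the global-to-local identity: for an arbitrary $\Z$-order $\calO$ the left orders $\calO_1,\dots,\calO_h$ need not all lie in a single genus, so the standard adelic double-counting must be refined to track the local isomorphism type of each $\calO_i$ and to verify that every locally optimal embedding globalizes across the genera appearing. A secondary subtlety is the prefactor accounting: the action of $B^\times$ on $b_0$ does not literally preserve $T_{B,\grn}$ (only the subgroup of units of norm in $(A^\times)^2$ does), so the normalization by $S = \wt{A}^\times_+/(A^\times)^2$ built into the definition of $T_{B,\grn}$ must be carefully balanced against the $B^\times/A^\times$ quotient.
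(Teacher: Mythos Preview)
Your outline follows the classical Eichler strategy, which is exactly what the paper does: it states the theorem without proof and says ``The proof of this theorem follows closely Eichler's original proof \cite{eichler:crelle55}; see also Vign\'eras's book \cite{vigneras},'' deferring all details to the companion paper \cite{xue-yang-yu:ECNF}. So your approach and the paper's (referenced) approach coincide.

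One remark on your stated obstacles: your first concern, that the left orders $\calO_1,\dots,\calO_h$ might not lie in a single genus, is not actually an issue. Since each $I_i$ is a \emph{locally principal} right $\calO$-ideal, at every prime $p$ we have $I_{i,p}=\alpha_p\calO_p$ for some $\alpha_p\in D_p^\times$, whence $\calO_{i,p}=\alpha_p\calO_p\alpha_p^{-1}\cong\calO_p$. Thus all $\calO_i$ are automatically in the genus of $\calO$, and the standard adelic double-counting for $\sum_i m(B,\calO_i,\calO_i^\times)=h(B)\prod_p m_p(B)$ goes through without the refinement you feared. Your second subtlety, about balancing the $S=\wt A^\times_+/(A^\times)^2$ normalization in $T_{B,\grn}$ against the $B^\times/A^\times$-action, is genuine and is precisely one of the places where the extension from $O_F$-orders to arbitrary $\Z$-orders requires care; this is handled in \cite{xue-yang-yu:ECNF}.
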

  The proof of this theorem follows closely Eichler's original proof
  \cite{eichler:crelle55}; see also Vign\'eras's book \cite{vigneras}.
 When $\grn=(1)=\wt{A}$, the Brandt matrix $\grB(\wt{A})\in
  \Mat_h(\bbZ)$ is the 
 identity and $\Tr \grB(\wt{A})=h(\calO)$. 

 \begin{cor}[Eichler's Class number
   formula] \label{class_number_formula} Let $\calO$ be a proper
   $A$-order in a totally definite quaternion algebra $D/F$. Then
\begin{equation}\label{eq:6}
  h(\calO)=\Mass(\calO)+\frac{1}{2} \sum_{ w(B)>1}(2-\delta(B))
  h(B)(1-w(B)^{-1})\prod_{p} m_p(B), 
\end{equation}
where $B$ runs over all mutually non-isomorphic CM proper
$A$-orders with $w(B)=[B^\times:A^\times]>1$. 
\end{cor}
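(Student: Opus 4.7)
The plan is to apply Theorem~\ref{thm:trac-brandt-matr} to the special case $\grn = \wt{A}$, i.e.\ $\beta = 1$. Since $1 = 1^2$, one has $\delta_\grn = 1$, and as remarked just after the theorem, $\grB(\wt{A}) \in \Mat_h(\bbZ)$ is the identity matrix, so $\Tr\grB(\wt{A}) = h(\calO)$. The trace formula thus specializes to
\[
h(\calO) = \Mass(\calO) + \frac{1}{4}\sum_B \frac{(2-\delta(B))\,h(B)\,\lvert T_{B,(1)}\rvert}{w(B)} \prod_p m_p(B),
\]
where the sum runs over CM proper $A$-orders $B$ with $T_{B,(1)} \neq \emptyset$.

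The substantive step is to compute $\lvert T_{B,(1)}\rvert = 2(w(B) - 1)$ for each such $B$. I would choose $1 \in S$ as the representative of the trivial coset in $\wt A^\times_+/(A^\times)^2$, so that $T_{B,(1)} = \{x \in B \setminus A : N_{K/F}(x) \in S\}$. The identity $N_{K/F}(ux) = u^2 N_{K/F}(x)$ for $u \in A^\times$ shows that $A^\times$ acts freely on $B^\ast := \{x \in B : N_{K/F}(x) \in \wt A^\times_+\}$, and that within each $A^\times$-orbit the elements whose norm lies in $S$ form a single $\{\pm 1\}$-pair. Using the properness $B \cap F = A$ one checks that $A \cap B^\times = A^\times$, so that every non-identity coset in $B^\times/A^\times$ consists entirely of elements of $B \setminus A$, contributing a full pair to $T_{B,(1)}$; the unique orbit lying inside $A$ is that of $\pm 1$.

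The hard part is the identification $B^\ast = B^\times$: any $x \in B$ whose norm $N_{K/F}(x)$ is a totally positive unit of $\wt A$ is already a unit of $B$. This is the main obstacle, relying on the CM structure of $K/F$, the equality $\wt A = \Nr_A(\calO)$, and the properness $B \cap F = A$; the detailed verification is carried out in \cite{xue-yang-yu:ECNF}. Granting this, $\lvert B^\ast/A^\times\rvert = [B^\times:A^\times] = w(B)$, and the orbit analysis above gives $\lvert T_{B,(1)}\rvert = 2(w(B)-1)$. Substituting this into the specialized trace formula yields
\[
h(\calO) = \Mass(\calO) + \frac{1}{2}\sum_B (2-\delta(B))\,h(B)\,\bigl(1 - w(B)^{-1}\bigr)\prod_p m_p(B),
\]
and since the terms with $w(B) = 1$ vanish (consistently with $T_{B,(1)} = \emptyset$ in that case), the sum may be restricted to CM proper $A$-orders with $w(B) > 1$, as stated.
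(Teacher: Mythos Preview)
Your proposal is correct and follows the same approach as the paper: specialize Theorem~\ref{thm:trac-brandt-matr} to $\grn=\wt A$, use that $\grB(\wt A)$ is the identity so its trace is $h(\calO)$, and reduce to the computation $\lvert T_{B,(1)}\rvert = 2(w(B)-1)$. The paper in fact gives even less detail than you do, simply stating the corollary after the remark that $\Tr\grB(\wt A)=h(\calO)$ and referring all details to \cite{xue-yang-yu:ECNF}; your orbit analysis and identification of the ``hard part'' $B^\ast = B^\times$ is a reasonable expansion of what that reference contains.
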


Let $h(\sqrt{d})$ denotes the class number of $\Q(\sqrt{d})$ for any
square free $d\in \bbZ$.  Applying formula (\ref{eq:6}) to the orders
$\bbO_8$ and $\bbO_{16}$ defined in Subsection~\ref{subsec:sqrt-q}
when $p\equiv 1\pmod{4}$, we obtain 
\begin{align*}
  h(\mathbb{O}_8)&=\varpi_ph(\sqrt{p})\left[\left(4-\Lsymb{2}{p}\right)
  \frac{\zeta_F(-1)}{2}+\left(2-\Lsymb{2}{p}\right)\frac{h(\sqrt{-p})}
  {24}+\frac{\delta_{1,\varpi_p}h(\sqrt{-3p})}{3} 
  \right],  \\ 
  h(\mathbb{O}_{16})&=\varpi_ph(\sqrt{p})\left[ \left(3-2\Lsymb{2}{p} 
  \right)\zeta_F(-1)+\left(2-\Lsymb{2}{p}\right)\frac{h(\sqrt{-p})}{12} 
  +\frac{1}{6}h(\sqrt{-3p})\right]. 
\end{align*}
Here
$\varpi_p:=3[O_{\Q(\sqrt{-p})}^\times:\bbZ[\sqrt{p}]^\times]^{-1}\in
\{1,3\}$,
and $\delta_{1, \varpi_p}=1$ or $0$ depending on whether $\varpi_p=1$
or $3$. When $p\equiv 1\pmod{8}$, we always have $\varpi_p=3$ by
\cite[Lemma~4.1]{xue-yang-yu:num_inv}, and hence
$\delta_{1,\varpi_p}=0$ in this case.  The special value $\zeta_F(-1)$
of the
Dedekind zeta-function can be calculated by Siegel's formula
\cite[Table~2, p.~70]{Zagier-1976-zeta}:
\begin{equation}
  \label{eq:26}
  \zeta_F(-1)=\frac{1}{60}\sum_{\substack{b^2+4ac=\grd_F\\ a,c>0}} a,
\end{equation}
where  $b\in \Z$ and $a,c\in \bbN_{>0}$.
We refer to \cite{xue-yang-yu:ECNF} for the details of the calculation
of the above 
formulas. 

For the sake of completeness, we also list the class number of 
the maximal
order $\bbO_1$.
If $p\equiv 1\pmod{4}$ and $p>5$, then
\begin{align*}
  h(\bbO_1)&=h(\sqrt{p})\left[\frac{\zeta_F(-1)}{2} +
     \frac{h(\sqrt{-p})}{8}+\frac{h(\sqrt{-3p})}{6}\right].      
\end{align*}
If $p\equiv 3\pmod{4}$ and $p\geq 7$, then
\begin{equation*}
  \begin{split}
  h(\bbO_1)&=h(\sqrt{p})\left[  \frac{\zeta_F(-1)}{2} +
     \left(\frac{13}{8}-\frac{5}{8}\left(\frac{2}{p}
    \right)\right)h(\sqrt{-p}) 
     +\frac{h(\sqrt{-2p})}{4}+\frac{h(\sqrt{-3p})}{6}  \right].    
  \end{split}
\end{equation*}
Lastly, $h(\bbO_1)=1, 2, 1$ for
the primes $p=2, 3, 5$, respectively.  These formulas can be
calculated directly using the \textit{classical} Eichler  class
number formula \cite[Corollaire~2.5]{vigneras}. 

\section{Supersingular abelian surfaces in an isogeny class}
\label{sec:ssing-abel-vari}
In this section we compute $|\calA_\pi|$ for $\pi=\sqrt{q}$, where
$q=p^a$ is an odd power of $p$. 
Recall that two orders $\calO_1$ and $\calO_2$ in a quaternion algebra are said to
be in the same \textit{genus} if their $p$-adic completions are
isomorphic for all primes $p$.  Let $D=D_{\infty_1, \infty_2}$ be the totally definite
quaternion algebra over $F=\Q(\sqrt{p})$ that splits at all finite
places. Recall that the orders $\bbO_8$ and $\bbO_{16}$ in $D$ are
defined only when $p\equiv 1\pmod{4}$.  Let $\calO$ be an order in the
genus of $\bbO_r$ with $r=1, 8$ or $16$ and $\grR:=\calO\cap F$ the
center of $\calO$. Then $\grR=O_F$ if $\calO$ is maximal, and
$\grR=\Z[\sqrt{p}]$ otherwise. Nevertheless, $\grR_p$ is always the maximal
order in $F_p$.  The $p$-adic completion
$\calO_p:=\calO\otimes \Z_p\simeq \Mat_2(\Z_p[\sqrt{p}])$ is a maximal
order in $D\otimes \Q_p$, and the natural projection
\[\calO\to \calO/\sqrt{p}\calO\cong \calO_p/\sqrt{p}\calO_p\simeq
\Mat_2(\F_p)\] induces a 
group homomorphism 
\begin{equation}\label{eq:7}
 \rho: \calO^\times/\grR^\times\to \PGL_2(\F_p). 
\end{equation}

Let $\tilde{u}\in \calO^\times/\grR^\times$ be a nontrivial element, and
$u\in \calO^\times$ a representative of $\tilde{u}$. Both the field
$F(u)\subset D$ and its suborders $\grR[u]\subseteq F(u)\cap \O$ depends
only on $\tilde{u}$, not on the choice of $u$. For simplicity,  we set 
\[K_{\tilde{u}}:=F(u),\quad \grR[\tilde{u}]:=\grR[u], \quad\text{and}\quad
B_{\tilde{u}}:=K_{\tilde{u}}\cap \calO. \]
Then both $B_{\tilde{u}}$ and $\grR[\tilde{u}]$ are CM proper $\grR$-orders
with $[B_{\tilde{u}}:\grR^\times]\geq [\grR[\tilde{u}]^\times:
\grR^\times]>1$. All such orders $\grR[\tilde{u}]$ have been classified in
\cite{xue-yang-yu:num_inv}. 
\begin{lemma}\label{lem:kernel-criterion}
  If the index $[B_{\tilde{u}}:\grR[\tilde{u}]]$ is coprime to $p$, then
  $\tilde{u}\not\in \ker(\rho)$.
\end{lemma}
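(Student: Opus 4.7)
The plan is to establish the contrapositive: assuming $\tilde u \in \ker(\rho)$ is a nontrivial element, I will show that $p \mid [B_{\tilde u} : \grR[\tilde u]]$.

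First I would unpack what $\tilde u \in \ker(\rho)$ really says. Since $\grR_p = O_{F_p}$ and $(\sqrt{p})$ is the prime of $O_F$ above $p$, we have $\grR/\sqrt{p}\grR \cong \F_p$, and the image of $\grR$ in $\Mat_2(\F_p)$ is exactly the field of scalars. Therefore $\rho(\tilde u)=1$ means the image of $u$ in $\Mat_2(\F_p)$ is a nonzero scalar, so I can pick a lift $c\in\Z$ with $\gcd(c,p)=1$ such that $u-c\in \sqrt{p}\calO_p$. At any prime $\ell\neq p$, the element $\sqrt{p}$ is a unit in $\grR_\ell$, so $\sqrt{p}\calO_\ell=\calO_\ell$ and the congruence is automatic. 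Thus globally $u-c\in \sqrt{p}\calO$, and I can write
\[
u = c + \sqrt{p}\,w \quad\text{for some } w\in\calO.
\]

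Next I would locate $w$ inside $B_{\tilde u}$ and identify a natural overorder of $\grR[\tilde u]$. Nontriviality of $\tilde u$ forces $u\notin F$, hence $w\notin F$ as well, and $F(w)=F(u)=K_{\tilde u}$. Therefore $w\in K_{\tilde u}\cap\calO=B_{\tilde u}$. Both $\grR$ and $w$ belong to $B_{\tilde u}$, so $\grR\oplus \grR w\subseteq B_{\tilde u}$ (the sum is direct because $w\notin F$). On the other hand, using $u^2=\Trd(u)u-\Nrd(u)$ with $\Trd(u),\Nrd(u)\in\grR$, one has $\grR[\tilde u]=\grR+\grR u$; substituting $u=c+\sqrt{p}w$ and absorbing $c$ into $\grR$ gives
\[
\grR[\tilde u] \;=\; \grR \oplus \sqrt{p}\,\grR w.
\]

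Finally I would compute the index between these two lattices. The obvious isomorphism
\[
(\grR\oplus \grR w)\big/(\grR\oplus \sqrt{p}\,\grR w) \;\cong\; \grR w/\sqrt{p}\grR w \;\cong\; \grR/\sqrt{p}\grR \;\cong\; \F_p
\]
shows $[\grR\oplus \grR w : \grR[\tilde u]]=p$. Combined with the inclusion $\grR\oplus \grR w \subseteq B_{\tilde u}$, this forces $p\mid [B_{\tilde u}:\grR[\tilde u]]$, which is the required contrapositive. There is no real obstacle here; the one point that merits care is step one, namely verifying that the scalar congruence $u\equiv c \pmod{\sqrt{p}\calO_p}$ upgrades to a global identity $u=c+\sqrt{p}w$ in $\calO$, which ultimately rests on the maximality of $\calO_p$ and the fact that $\sqrt{p}$ is a unit away from $p$.
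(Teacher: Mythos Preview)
Your contrapositive argument is sound in outline, but one step is not justified as written. You assert that $\Trd(u),\Nrd(u)\in\grR$, whence $\grR[\tilde u]=\grR+\grR u$ as $\grR$-modules. This need not hold globally: for $\calO$ in the genus of $\bbO_8$ one has $\grR=\Z[\sqrt{p}]\subsetneq O_F$, and $\bbO_8$ is \emph{not} closed under the canonical involution (its $2$-adic completion $\left(\begin{smallmatrix}R_2&2O_{F,2}\\O_{F,2}&O_{F,2}\end{smallmatrix}\right)$ is visibly not involution-stable), so $\Trd(u)$ is only guaranteed to lie in $O_F$, not in $\grR$. Consequently $\grR[\tilde u]$ may strictly contain $\grR+\grR u$.

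The fix is painless, because the statement you want is $p\mid [B_{\tilde u}:\grR[\tilde u]]$, a condition at $p$. Localizing, $\grR_p=O_{F_p}$ is maximal, so $\Trd(u),\Nrd(u)\in\grR_p$ and hence $\grR_p[u]=\grR_p\oplus\grR_p u=\grR_p\oplus\sqrt{p}\,\grR_p w$; your index computation then gives $p\mid[(B_{\tilde u})_p:\grR_p[u]]$, which suffices. Equivalently, in your global formulation, the inclusion $\grR+\grR u\subseteq\grR[\tilde u]$ has index prime to $p$ (since it is an equality after tensoring with $\Z_p$), so $p\mid[B_{\tilde u}:\grR+\grR u]$ still forces $p\mid[B_{\tilde u}:\grR[\tilde u]]$.

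For comparison, the paper argues the direct implication rather than the contrapositive: from $p\nmid[B_{\tilde u}:\grR[\tilde u]]$ it gets $(B_{\tilde u})_p=\grR_p[u]$, notes this is free of rank $2$ over the maximal order $\grR_p$, and concludes that the image $\F_p[\bar u]\subseteq\calO/\sqrt{p}\calO$ is $2$-dimensional, whence $\bar u\notin\F_p$. Your route is more constructive---you exhibit the explicit over-lattice $\grR+\grR w$ witnessing the factor of $p$---while the paper's dimension count is a line shorter; both rest on the same key fact that $\grR_p$ is maximal.
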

\begin{proof}
Suppose that $p\nmid   [B_{\tilde{u}}:\grR[\tilde{u}]]$.  Then
$B_{\tilde{u}}\otimes \Z_p=\grR_p[u]$, and 
\[\calO/\sqrt{p}\calO\supset B_{\tilde{u}}/\sqrt{p}B_{\tilde{u}}=\grR_p[u]\otimes_{\grR_p}
(\grR_p/\sqrt{p}\grR_p)=\F_p[\bar{u}],\]
where $\bar{u}$ denotes the image of $u$ modulo
$\sqrt{p}B_{\tilde{u}}$. Since $\grR_p$ is maximal in $F_p$, $\grR_p[u]$ is
a free $\grR_p$-module of rank $2$, and hence $\F_p[\bar{u}]$ is an
$\F_p$-space of dimension $2$. In particular, $\bar{u}\not\in \F_p$. 
\end{proof}

Recall the $h(D)=1, 2, 1$ when $p=2, 3, 5$ respectively. In
particular, when $p=2$ or $5$, $\bbO_1$ is the unique maximal order in
$D$ up to conjugation. Using the Magma Algebra System \cite{magma},
one checks that 
\begin{equation}
  \label{eq:4.2a}
  \bbO_1^\times/O_F^\times\simeq S_4 \quad \text{if }p=2, \quad
\text{and} \quad \bbO_1^\times/O_F^\times\simeq A_5 \quad \text{if
}p=5.
\end{equation}
When $p=3$, there are two maximal orders $\bbO, \bbO'$ up to
conjugation. We have 
\begin{equation}
  \label{eq:4.3}
\text{ $\bbO^\times/O_F^\times\simeq S_4$ and
  $\bbO'^\times/O_F^\times\simeq 
D_{12}$,  the dihedral group of order $24$.} 
\end{equation}
. 

\begin{prop}\label{17} \

  {\rm (1)} The morphism $\rho$ is injective for any order
  $\calO$ in the genus of $\bbO_r$ with $r=1, 8, 16$ when $p\geq 5$. 

  {\rm (2)} When $p=2$,  $\rho$ is surjective. 

  {\rm (3)} When $p=3$, $\rho$ is an isomorphism for $\calO=\bbO$, and
  $\img(\rho)\simeq D_4$ for $\calO=\bbO'$.
\end{prop}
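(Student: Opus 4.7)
The plan is to apply Lemma~\ref{lem:kernel-criterion} systematically: for any nontrivial $\tilde u \in \calO^\times/\grR^\times$, if $p \nmid [B_{\tilde u}:\grR[\tilde u]]$ then $\tilde u \notin \ker(\rho)$. Thus the overall strategy is to enumerate, up to $\calO^\times$-conjugation, the finitely many pairs $(\grR[\tilde u], B_{\tilde u})$ of CM proper $\grR$-orders that arise from a nontrivial $\tilde u$, using the classification carried out in \cite{xue-yang-yu:num_inv}, and to test the $p$-divisibility of each $[B_{\tilde u}:\grR[\tilde u]]$ case by case.

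For part~(1), since $D$ is totally definite, $\calO^\times/\grR^\times$ is a finite subgroup of $\mathrm{SO}(3)$, so its exponent is at most $5$; the only CM extensions $K_{\tilde u}/F$ that can occur are $F(\zeta_3)$, $F(\zeta_4)$, $F(\zeta_5)$, or $F(\sqrt{-d})$ for small square-free $d$. Running through the list of pairs $(\grR[\tilde u], B_{\tilde u})$ classified in \cite{xue-yang-yu:num_inv}, one observes that the index $[B_{\tilde u}:\grR[\tilde u]]$ is always supported on $\{2,3\}$ (in the main cases it divides a power of $2\cdot[O_F:\grR]$). Hence for every $p\geq 5$ the criterion of Lemma~\ref{lem:kernel-criterion} applies to all nontrivial $\tilde u$, yielding injectivity.

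For part~(2), $\PGL_2(\F_2)\simeq S_3$ has order $6$, so it suffices to exhibit elements $\tilde u_2, \tilde u_3 \in \bbO_1^\times/O_F^\times \simeq S_4$ of orders $2$ and $3$ with $2\nmid[B_{\tilde u_i}:\grR[\tilde u_i]]$; a $3$-cycle in $S_4$ lives in $F(\zeta_3)$ with odd conductor index, and a suitable involution lives in $F(\sqrt{-1})$ with the same property. The lemma then forces $\rho(\tilde u_2), \rho(\tilde u_3)$ to be nontrivial, and together they generate $S_3$.

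For part~(3) with $\calO=\bbO$, we have $|\bbO^\times/O_F^\times|=24=|\PGL_2(\F_3)|$, so injectivity, verified exactly as in part~(1) since the relevant indices over $F=\Q(\sqrt 3)$ are $2$-powers, forces bijectivity. For $\calO=\bbO'$ with $\bbO'^\times/O_F^\times\simeq D_{12}$, the target image $D_4$ has order $8$, so $\ker(\rho)$ must equal the unique normal subgroup $C_3\triangleleft D_{12}$. On every element outside this $C_3$ the criterion of Lemma~\ref{lem:kernel-criterion} still applies (so those elements inject), but for the generator $\tilde u$ of $C_3$ itself the lemma is silent. The main obstacle is this last step: I would verify $\rho(\tilde u)=1$ \emph{directly}, by writing down explicit generators of $\bbO'$ (via the Magma Algebra System \cite{magma}), identifying a specific element of order~$3$ modulo $O_F^\times$ in $F(\zeta_3)\cap \bbO'$, and reducing modulo $\sqrt 3$ to check that its image is a scalar matrix in $\PGL_2(\F_3)$.
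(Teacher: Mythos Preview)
Your overall strategy---apply Lemma~\ref{lem:kernel-criterion} together with the classification of CM proper $\grR$-orders from \cite{xue-yang-yu:num_inv}---is exactly the paper's, and your treatment of parts~(1) and~(3) for $\calO=\bbO$ is essentially the same. Two points of execution differ from the paper and are worth noting.

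For part~(3) with $\calO=\bbO'$, you propose to verify $\rho(\tilde u)=1$ for the generator of $C_3$ by an explicit Magma computation. The paper avoids any computation here by a pure group-theoretic argument: since $\lvert D_{12}\rvert=24=\lvert\PGL_2(\F_3)\rvert$ but $D_{12}\not\simeq\PGL_2(\F_3)\simeq S_4$, the map $\rho$ cannot be injective; on the other hand $\rho$ is injective on the Sylow $2$-subgroups (by the lemma, as in part~(1)), so $\ker(\rho)$ is a nontrivial normal subgroup of odd order, hence equals the unique Sylow $3$-subgroup $C_3$. This gives $\img(\rho)\simeq D_{12}/C_3\simeq D_4$ with no computation. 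Your Magma verification would work too, but the paper's argument is cleaner and more robust. Incidentally, your phrase ``on every element outside $C_3$ the criterion of the lemma still applies'' is not literally what one checks: for elements of order $6$ or $12$ the lemma is not applied directly; rather one uses that a suitable power lies in a Sylow $2$-subgroup, or equivalently the normality argument just described.

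For part~(2), the paper takes an element $\tilde u$ of order~$4$, for which $O_F[\tilde u]=\Z[\zeta_8]=O_{K_{\tilde u}}$ and the index is~$1$; its image in $\PGL_2(\F_2)\simeq S_3$ is then a nontrivial element of order~$2$. Your proposed order-$2$ element in $F(\sqrt{-1})$ is less safe: for $u=i$ one computes $[\Z[\zeta_8]:\Z[\sqrt{2},i]]=2$, so whether the lemma applies depends on whether $B_{\tilde u}$ equals $O_K$ or $\grR[\tilde u]$, which you have not checked. Using the order-$4$ element sidesteps this. Similarly, in part~(1) at $p=5$ the paper handles the maximal order by invoking the simplicity of $A_5$ rather than your blanket index claim; the latter is in fact correct (for $\tilde u$ of order~$5$ one has $O_F[\zeta_5]=\Z[\zeta_5]$), but you should say so explicitly.
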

\begin{proof}
  Let $\tilde{u}\in \calO^\times/\grR^\times$ be a nontrivial
  element. According to the classification in
  \cite{xue-yang-yu:num_inv}, if
  $\ord(\tilde{u})$ is a power of $2$, then $\ord(\tilde{u})=2$ or
  $4$, and $[O_{K_{\tilde{u}}}: \grR[\tilde{u}]]\in \{1, 2, 4\}$. By
  Lemma~\ref{lem:kernel-criterion}, $\rho$ is injective on the Sylow
  $2$-subgroups of $\calO^\times/\grR^\times$ when $p$ is odd.  Similarly,
  if $p\neq 3$ and $\ord(\tilde{u})=3$, then
  $[O_{K_{\tilde{u}}}: \grR[\tilde{u}]]\in \{1, 2, 4\}$, so once again
  $\tilde{u}\not\in \ker(\rho)$.  This exhaust the list of all
  nontrivial elements of $\calO^\times/\grR^\times$ when one of the
  following conditions holds: (i) $p\geq 7$; (ii) $p=5$ and $\calO$ is
  not maximal.  Hence $\rho$ is injective in these two
  cases. Moreover, if $p=5$ and $\calO$ is maximal, then $\rho$ is
  injective since $\calO^\times/O_F^\times$ is the simple
  group $A_5$.   Part (1) of the proposition follows.
 
  Suppose that $p=2$. Then $\calO^\times/O_F^\times \simeq S_4$, and
  for any element $\tilde{u}\in \calO^\times/O_F^\times$ of order $4$,
  $O_F[\tilde{u}]$ coincides with the maximal order
  $O_{K_{\tilde{u}}}\simeq \bbZ[\zeta_8]$. Thus
  $\rho(\tilde{u})\in \PGL_2(\F_2)\simeq S_3$ is a nontrivial element
  of order $2$. Since $\rho$ does not vanish on any order 3 elements
  as remarked, it maps $\calO^\times/O_F^\times$ surjectively onto
  $\PGL_2(\F_2)$.

  Suppose that $p=3$ and $\calO^\times/O_F^\times\simeq S_4$. In this
  case we have $D=\qalg{-1}{-1}{\Q(\sqrt{3})}$, and $\calO$ is
  $D^\times$-conjugate to $B+B(1+i+j+k)/2$, where
  $B=\Z[\sqrt{3}, (1+\sqrt{3})(1+i)/2]\subset F(i)$. If
  $\tilde{u}\in \calO^\times/O_F^\times$ is an element of order $3$,
  then $O_F[\tilde{u}]=B_{\tilde{u}}\simeq \Z[\sqrt{3}, \zeta_6]$, and
  hence $\tilde{u}\not\in \ker(\rho)$. Since $\rho$ is also injective
  on the Sylow $2$-subgroups of $\calO^\times/O_F^\times$, it induces
  an isomorphism between $\calO^\times/O_F^\times$ and
  $\PGL_2(\F_3)\simeq S_4$. 

  Lastly, suppose that $p=3$ and
  $\calO^\times/O_F^\times\simeq D_{12}$, whose Sylow 2-subgroups are
  isomorphic to $D_4$.  Note that $D_{12}$ has a unique Sylow
  $3$-subgroup, which is cyclic of order $3$, and
  $D_{12}=C_3\rtimes D_4$. Necessarily, $\ker(\rho)$ is nontrivial
  because $D_{12}\not\simeq \PGL_2(\F_3)$. Since $\rho$ is injective on
  the Sylow $2$-subgroups, we must have $\ker(\rho)=C_3$ and
  $\img(\rho)\simeq D_4$. 
\end{proof}

Let $\pi=\sqrt{q}$ be the Weil $q=p^a$-number with $a$ odd. Recall
that $\calA_\pi$ denotes the set of isomorphism classes in the
$\Fq$-isogeny class of $A_\pi$. The goal is to evaluate $|\calA_\pi|$.
We decompose $\calA_\pi=\Sp(\pi)\coprod \calA_\pi^1$ as the union of
the set $\Sp(\pi)$ of superspecial abelian surfaces and $\calA_\pi^1$
consisting of non-superspecial abelian surfaces.  The case $q=p$ is
done in \cite{xue-yang-yu:ECNF} and we may assume that $q>p$ for which
$\calA_\pi^1$ is non-empty. Let $A\in \calA_\pi^1$ be a member with
covariant \dieu module $M$.  There is a unique isogeny
$\varphi:A\to \wt A$ of degree $p$ with $\wt A$ superspecial: $\wt A$
is constructed with the \dieu module $M+F^{-1}VM$ \cite[Lemma
4.4]{yu:ss-siegel}.  Consider the map $\pr: \calA_\pi^1\to \Sp(\pi)$
sending $A$ to $\wt A$.
 
\begin{prop}\label{18} \ 

  {\rm (1)} Let $A_0\in \Sp(\pi)$ be a superspecial member. The fiber
  $\pr^{-1}(A_0)$ in $\calA_\pi^1$ has $(q-p)/|\rho(\Aut(A_0)|$
  elements, where $\rho$ is the map defined in \eqref{eq:7} by viewing
  the endomorphism ring $\End(A_0)$ as an order in $D$.

{\rm (2)} We have $|\calA_{\pi}|=|\Sp(\pi)|+\sum_{A_0\in \Sp(\pi)}
(q-p)/|\rho(\Aut(A_0)|$.

\end{prop}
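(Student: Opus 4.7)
The plan is to identify the fiber $\pr^{-1}(A_0)$ with a quotient of an explicit subset of $\mathbb{P}^1(\F_q)$ under the $\Aut(A_0)$-action and then exploit the Galois theory of $\F_q/\F_p$ when $a$ is odd. Write $M_0=M(A_0)$ with the $O_{F,p}$-module structure described in Subsection~\ref{subsec:sqrt-q}, so that $F=\sqrt{p}\sigma$ and $V=\sqrt{p}\sigma^{-1}$ (hence $FM_0=VM_0=\sqrt{p}M_0$) and $\End_{\rm DM}(M_0)\cong\Mat_2(O_{F,p})$. By the equivalence between abelian varieties over $\F_q$ in the isogeny class of $A_\pi$ and their covariant Dieudonn\'e modules together with prime-to-$p$ Tate modules, members of $\calA_\pi^1$ with $\pr(A)=A_0$ correspond bijectively, modulo $\Aut(A_0)=\End(A_0)^\times$, to Dieudonn\'e submodules $M\subset M_0$ for which $M_0/M\cong\F_q$ as a Dieudonn\'e module (with $F=V=0$) and $M+F^{-1}VM=M_0$.

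Since $F$ and $V$ must vanish on $M_0/M$, every such $M$ contains $\sqrt{p}M_0$, and $M/\sqrt{p}M_0$ is an $\F_q$-hyperplane in
\[
M_0/\sqrt{p}M_0\;\cong\;(O_{F,p}/\sqrt{p})^{\oplus 2}\otimes_{\F_p}\F_q\;\cong\;\F_q^{\oplus 2},
\]
so the set of candidate $M$'s is in natural bijection with $\mathbb{P}^1(\F_q)$. A direct computation on the isocrystal yields
\[
F^{-1}V=(\sqrt{p}\sigma)^{-1}(\sqrt{p}\sigma^{-1})=\sigma^{-2},
\]
using that $\sqrt{p}\in O_{F,p}\subset\End_{\rm DM}(M_0)$ is fixed by $\sigma$. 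Hence $M+F^{-1}VM=M_0$ is equivalent to $\sigma^{-2}M\neq M$, i.e., the corresponding line in $\F_q^{\oplus 2}$ is not $\sigma^{-2}$-stable. Since $\sigma^{-2}=\sigma^{a-2}$ fixes $t\in\F_q$ iff $t\in\F_{p^{\gcd(a-2,a)}}=\F_{p^{\gcd(2,a)}}=\F_p$ (using $a$ odd), the subset $S(A_0)$ of $M$'s with the non-superspecial property is in bijection with $\mathbb{P}^1(\F_q)\setminus\mathbb{P}^1(\F_p)$, a set of cardinality $q-p$.

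Next, $\calO^\times=\Aut(A_0)\subset \Mat_2(O_{F,p})^\times$ acts on $M_0$, hence on $M_0/\sqrt{p}M_0\cong\F_q^{\oplus 2}$ via reduction modulo $\sqrt{p}$, yielding an action factoring as $\calO^\times\to\GL_2(\F_p)\hookrightarrow\GL_2(\F_q)$. Central scalars act trivially on lines, so the induced action on $\mathbb{P}^1(\F_q)\setminus\mathbb{P}^1(\F_p)$ factors through $\rho:\calO^\times/\grR^\times\to\PGL_2(\F_p)$ of~\eqref{eq:7} with image $\rho(\Aut(A_0))$. This action is \emph{free}: any non-identity $g\in\PGL_2(\F_p)$ has its $\bar\F_p$-fixed points on $\mathbb{P}^1$ contained in $\mathbb{P}^1(\F_{p^2})$ (eigenlines of a lift in $\GL_2(\F_p)$), and since $a$ is odd, $\mathbb{P}^1(\F_{p^2})\cap\mathbb{P}^1(\F_q)=\mathbb{P}^1(\F_p)$, which has been excluded. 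Therefore $|\pr^{-1}(A_0)|=|S(A_0)|/|\rho(\Aut(A_0))|=(q-p)/|\rho(\Aut(A_0))|$, proving (1). Part~(2) then follows from $|\calA_\pi|=|\Sp(\pi)|+|\calA_\pi^1|$ by summing (1) over $A_0\in\Sp(\pi)$.

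The main technical obstacle is the precise Dieudonn\'e-theoretic translation of ``degree-$p$ isogeny to a superspecial abelian variety with $a(A)=1$'' into the two conditions on $M\subset M_0$ used above, together with the careful verification of $F^{-1}V=\sigma^{-2}$ and of the resulting $\sigma^{-2}$-action on $M_0/\sqrt{p}M_0$. Once this is in place, the identification of $S(A_0)$ with the $\F_q$-points of the Drinfeld period domain $\mathbb{P}^1\setminus\mathbb{P}^1(\F_p)$, and the freeness of the $\PGL_2(\F_p)$-action, which hinges essentially on $a$ being odd, make the final counts entirely transparent.
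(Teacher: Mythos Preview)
Your proof is correct and follows essentially the same route as the paper's: parametrize the fiber by lines in $M_0/\sqrt{p}M_0\cong\F_q^{\oplus 2}$, exclude the superspecial locus to obtain $\mathbb{P}^1(\F_q)\setminus\mathbb{P}^1(\F_p)$, and quotient by the free action of $\rho(\Aut(A_0))\subset\PGL_2(\F_p)$. The only notable difference is that you identify the superspecial points directly via the computation $F^{-1}V=\sigma^{-2}$ and the parity of $a$, whereas the paper cites an external result (\cite[Lemma~6.1]{yu:ss-siegel}) characterizing the superspecial locus as $\mathbb{P}^1(\F_{p^2})$ and then intersects with $\mathbb{P}^1(\F_q)$; your self-contained argument is a clean alternative.
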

\begin{proof}
  (1) Let $M_0$ be the \dieu module of $A_0$. If $A\in \pr^{-1}(A_0)$,
  then the \dieu module $M$ of $A$ satisfies (i) 
  $VM_0\subset M \subset M_0$, (ii) $\ol M:=M/VM_0$ is $1$-dimensional
  $\Fq$-subspace of $\ol M_0:=M_0/VM_0$, 
  and (iii) $M$ is non-superspecial. 
  Conversely, such a \dieu module gives
  rise to a member $A$ in $\calA_\pi^1$ with $\wt A\simeq A_0$. If
  two members $\varphi_i:A_i\to A_0$ ($i=1,2$) of $\pr^{-1}(A_0)$ are
  isomorphic, then any isomorphism $\alpha:A_1\isoto A_2$ lifts
  uniquely to an automorphism $\wt \alpha:A_0\isoto A_0$. Thus,
  $\pr^{-1}(A_0)$ is in bijection with the set $\calX$ of \dieu
  modules $M$ 
  satisfying (i),(ii) and (iii) modulo the action of $\Aut(A_0)$. 
  The set $\calX$ consists of points $\ol M$ of $\bfP^1_{\ol M_0}(\Fq)$
  satisfying (iii). By \cite[Lemma 6.1]{yu:ss-siegel},  
  that a point $\ol M\in \bfP^1_{\ol M_0}(\Fpbar)$ is superspecial if
  and only if it is contained in $\bfP^1_{\ol M_0}(\F_{p^2})$. It 
  follows that $\calX\simeq \bfP^1_{\ol M_0}(\Fq)-\bfP^1_{\ol
  M_0}(\Fp)$. Since $A_0$ is superspecial, $VM_0=\sqrt{p}M_0$ and the
  action of $\Aut(A_0)$ 
  on $\calX$ factors through the action on $\ol
  M_0=M_0/\sqrt{p}M_0$. This shows that 
  \begin{equation}
    \label{eq:4.2}
   \pr^{-1}(A_0)\simeq
  \Aut(A_0)\backslash \calX \simeq \rho(\Aut(A_0))\backslash
  [\bfP^1_{\ol M_0}(\Fq)-\bfP^1_{\ol M_0}(\Fp)]. 
  \end{equation}
  The action of $\PGL_2(\Fp)$ on $\bfP^1(\Fq)$ is faithful and 
  it has no
  fixed points in $\bfP^1(\Fq)-\bfP^1(\Fp)$. Therefore,
  $|\pr^{-1}(A_0)|=(q-p)/|\rho(\Aut(A_0)|$.         
  (2) This follows  from (1). 
\end{proof}

Note that the formula for $|\calA_\pi|$ holds when $q=p$. 

\begin{theorem}\label{4.4} 
Let $\pi=\sqrt{q}$, where $q$ is an odd power a prime $p$. 

{\rm (1)} For $p=2$, $|\calA_\pi|=1+(q-2)/6$.

{\rm (2)} For $p=3$, $|\calA_\pi|=2+(q-3)/6$.

{\rm (3)} For $p=5$, $|\calA_\pi|=3+4(q-5)/15$.

{\rm (4)} For $p> 5$ and $p\equiv 3 \pmod 4$, one has
\begin{equation*}
     |\calA_\pi|=h(\sqrt{p})\left[(q-p+1) \frac{\zeta_F(-1)}{2} +
      \left(\frac{13}{8}-\frac{5}{8}\left(\frac{2}{p}
    \right)\right)h(\sqrt{-p})
    +\frac{h(\sqrt{-2p})}{4}+\frac{h(\sqrt{-3p})}{6}  \right].
\end{equation*}
For $p> 5$ and $p\equiv 1 \pmod 4$, one has   
\begin{equation*}
     |\calA_\pi|=h(\sqrt{p})\left[(q-p+1)(1+5\beta_p)
     \frac{\zeta_F(-1)}{2} + 
      (1+\beta_p)\frac{h(\sqrt{-p})}{8}
   +\frac{2h(\sqrt{-3p})}{3}  \right], 
\end{equation*}
where $\beta_p=\varpi_p(2-\left(\frac{2}{p}\right ))$.
\end{theorem}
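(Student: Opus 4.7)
The plan is to apply Proposition~\ref{18}(2), which gives
\[ |\calA_\pi| = |\Sp(\pi)| + (q-p)\, S(\pi), \qquad S(\pi):=\sum_{A_0\in \Sp(\pi)} \frac{1}{|\rho(\Aut(A_0))|}, \]
reducing the computation to (i) the superspecial count $|\Sp(\pi)|$, which is read off from \eqref{eq:10} together with the class number formulas at the end of Section~\ref{sec:gener-eichl-trace}, and (ii) the weighted sum $S(\pi)$.

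For $p\geq 5$, Proposition~\ref{17}(1) says $\rho$ is injective on $\calO^\times/\grR^\times$ for every order $\calO$ in the genus of $\bbO_1$ (and of $\bbO_8, \bbO_{16}$ when $p\equiv 1\pmod 4$). Hence $|\rho(\Aut(A_0))| = w(\End(A_0))$ in the notation of \eqref{eq:34}, and the bijection $\Sp(\pi)\leftrightarrow \coprod_r \Cl(\bbO_r)$ furnished by \eqref{eq:10} yields
\[ S(\pi) = \sum_r \Mass(\bbO_r), \]
with $r\in\{1\}$ when $p\equiv 3\pmod 4$ and $r\in\{1,8,16\}$ when $p\equiv 1\pmod 4$. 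By Eichler's class number formula (Corollary~\ref{class_number_formula}), $\Mass(\bbO_r)$ is precisely the first (mass) term of the explicit expressions for $h(\bbO_r)$ listed at the end of Section~\ref{sec:gener-eichl-trace}: $\Mass(\bbO_1)=h(\sqrt{p})\zeta_F(-1)/2$ and, when $p\equiv 1\pmod 4$, $\Mass(\bbO_8)=\varpi_p h(\sqrt{p})(4-(2/p))\zeta_F(-1)/2$ and $\Mass(\bbO_{16})=\varpi_p h(\sqrt{p})(3-2(2/p))\zeta_F(-1)$. Combining $|\Sp(\pi)|$ with $(q-p)S(\pi)$ and collecting the coefficients of $\zeta_F(-1)$, $h(\sqrt{-p})$, $h(\sqrt{-3p})$ (and $h(\sqrt{-2p})$ in the $p\equiv 3\pmod 4$ case) delivers parts (3) and (4); for $p=5$ the same scheme is run with the exceptional value $h(\bbO_1)=1$ replacing the general $p>5$ formula.

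For the small primes $p=2,3$ the map $\rho$ has nontrivial kernel, so the mass interpretation of $S(\pi)$ is unavailable; instead, I would compute $|\rho(\Aut(A_0))|$ directly from Proposition~\ref{17}(2),(3). For $p=2$, $|\Sp(\pi)|=h(\bbO_1)=1$ and $|\rho(\Aut)|=|\PGL_2(\F_2)|=6$, giving (1). For $p=3$, $|\Sp(\pi)|=h(\bbO_1)=2$ splits between the maximal order $\bbO$ (with $\rho$ an isomorphism onto $\PGL_2(\F_3)$ of order $24$) and $\bbO'$ (with $\img(\rho)\cong D_4$ of order $8$); hence $S(\pi)=1/24+1/8=1/6$ and (2) follows.

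The principal bookkeeping occurs in part (4) for $p\equiv 1\pmod 4$, where three mass terms combine with three class number formulas and the dependence on $\varpi_p$ and $\delta_{1,\varpi_p}$ must be tracked carefully. The key simplifications are the identity $1+\varpi_p(4-(2/p))+2\varpi_p(3-2(2/p))=1+5\beta_p$ producing the coefficient of $(q-p+1)\zeta_F(-1)/2$, the identity $1/8+\beta_p/24+\beta_p/12=(1+\beta_p)/8$ producing the $h(\sqrt{-p})$-coefficient, and the case-check that $1/6+\varpi_p\delta_{1,\varpi_p}/3+\varpi_p/6=2/3$ for both values $\varpi_p\in\{1,3\}$, so that the $h(\sqrt{-3p})$-coefficient is independent of which case for $\varpi_p$ occurs. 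All remaining work is routine substitution.
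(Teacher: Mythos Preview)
Your proof is correct and follows exactly the paper's approach: Proposition~\ref{18}(2) plus Proposition~\ref{17} reduce everything to $|\Sp(\pi)|+(q-p)\Mass(\Sp(\pi))$ for $p\ge 5$ and to direct evaluation for $p=2,3$, after which one plugs in \eqref{eq:10} and the class number formulas of Section~\ref{sec:gener-eichl-trace}. In fact you supply the explicit bookkeeping identities (for the coefficients of $\zeta_F(-1)$, $h(\sqrt{-p})$, and $h(\sqrt{-3p})$ in the $p\equiv 1\pmod 4$ case) that the paper leaves implicit.
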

\begin{proof}
  (1) This follows form Propositions~\ref{17} (2) and \ref{18} (2), 
  $|\Sp(\pi)|=1$ and $|\PGL_2(\F_{2})|=6$. 
  (2) This follows from Propositions~\ref{17} (3) and \ref{18} (2).

  (3) and (4): By Propositions~\ref{17} (1) and \ref{18} (2),
  we have $|\calA_\pi|=|\Sp(\pi)|+(q-p)\Mass(\Sp(\pi))$. 
  The desired formulas
  follow from (\ref{eq:10}) and the class number formulas in
  Section~\ref{sec:gener-eichl-trace}. 
\end{proof}

We observe that the set $\calX$ in the proof is the set of
$\Fq$-rational points of the Drinfeld period domain $\Omega^2$ over
$\Fp$.


\section{Analogies between abelian varieties and lattices}
\label{sec:smf}



\subsection{Local analogues}
\label{sec:smf.loc}

In this section we discuss analogies between abelian varieties and
lattices. There are several descriptions of abelian varieties in terms
of ideal classes or lattices. The prototype of such descriptions 
is the Deuring-Eichler
correspondence, which establishes the following bijection
\begin{equation}
  \label{eq:smf.1}
  \left\{\parbox{1.4in}{isomorphism classes of supersingular elliptic
  curves over $\Fpbar$} 
  \right\} \oneone \left\{\parbox{1.4in}{ideal classes of a maximal
  order in 
  $D_{p,\infty}$} 
  \right\},
\end{equation}
where $p$ is a prime number 
and $D_{p,\infty}$ is the quaternion $\Q$-algebra
ramified exactly at $\{p,\infty\}$. 
We refer to 
Waterhouse~\cite{waterhouse:thesis}, 
Deligne~\cite{deligne:ord}, Ekedahl~\cite{ekedahl:ss}, 
Katsura and Oort~\cite{katsura-oort:surface},
C.-F.~Yu~\cite{yu:thesis, yu:mass-hb, yu:smf, yu:sp-prime}, Centeleghe
and 
Stix~\cite{centeleghe-stix} J.~Xue, T.-C.~Yang and C.-F. Yu
\cite{xue-yang-yu:ECNF, xue-yang-yu:sp_as, xue-yang-yu:sp_as2},
Jordan-Keeton-Poonen-Shepherd-Barron-Tate \cite{poonen_et:av} 
and others for various generalizations, 
and explicit formulas for numbers of certain abelian varieties.  
 
The basic analogy between abelian varieties and lattices 
may start as follows. 
For any $\Z$-lattice $\Lambda$, the completion $\Lambda\otimes \Z_\ell$
at a prime $\ell$ is a local analogue of $\Lambda$ at $\ell$. 
If $A$ is an abelian variety over an arbitrary field $k$, then the
$\ell$-divisible group $
A(\ell):=A[\ell^\infty]=\varinjlim A[\ell^n]$ associated to $A$
can be viewed as its local analogue at $\ell$. If $\ell \neq
\fchar k$, then there is an equivalence of categories
\begin{equation}
  \label{eq:smf.3}
  \left (\parbox{0.9in}{$\ell$-divisible groups over $k$} 
  \right ) \oneone \left (\parbox{1.6in}{finite free $\Z_\ell$-modules
  with continuous $\Gamma_k$-action} 
  \right ),
\end{equation}
where $\Gamma_k:=\Gal(k_s/k)$ is the absolute Galois group of $k$ and
$k_s$ is a separable closure of $k$. 

When $k$ is a perfect field of \ch $p>0$, the covariant \dieu theory
establishes an equivalence of categories
\begin{equation}
  \label{eq:smf.4}
  \left (\parbox{0.9in}{$p$-divisible groups over $k$} 
  \right ) \oneone \left (\, \parbox{1.5in}{finite $W$-free \dieu
  modules over $k$} \, \right ).
\end{equation}


Thus, when the ground field $k$ is finite, the local analogues of $A$
can be described using its Tate modules and \dieu module. This description
is utilized in Section~\ref{sec:isom}. We shall extend some results of  
Section~\ref{sec:isom} to more general ground fields. 


\subsection{Genera of abelian varieties }
\label{sec:smf.s}

\def\Qisog{{\rm Qisog}}
\def\GQisog{{\rm GQisog}}
\def\Gisom{{\rm Gisom}}
We shall set up a long list of definitions, in order to make
everything precise. Several of them also appear in \cite{yu:smf} and
Section 2. 
A field that is finitely generated over its prime field is called a
finitely generated field. We write $\Hom(A_1,A_2)$ for the abelian
group of $k$-homomorphisms between abelian varieties $A_1$ and $A_2$
over $k$. In the following definition, the prime $\ell$ is not
necessarily distinct from $\fchar k$.
 


\begin{defn}\label{smf.1}

\npr (1) Let $A_1$ and $A_2$ be two abelian varieties over a field
    $k$. 
    \begin{itemize}
    \item[(i)] Denote by $\Qisog(A_1,A_2)$ the set of quasi-isogenies
    $\varphi:A_1\to A_2$ over $k$, i.e. $N \varphi$ is an isogeny for
    some integer $N$.
    \item[(ii)] For any prime $\ell$, denote 
    by $\Qisog(A_1(\ell),A_2(\ell))$ the set of
    quasi-isogenies $\varphi: A_1(\ell)\to A_2(\ell)$ over $k$. Denote
    by $\Isom(A_1(\ell),A_2(\ell))$ the set of isomorphisms
    $\varphi:A_1(\ell) \isoto A_2(\ell)$ of $\ell$-divisible groups
    over $k$. 
    \end{itemize}

\npr (2) Let $x=A_0$ be an abelian variety over $k$. 
\begin{itemize}
\item[(i)] Denote by $G_x$ the automorphism group
    scheme over $\Z$ associated to $A_0$. It is the group scheme over
    $\Z$ which 
    represents the functor
    \begin{equation}
      \label{eq:smf.5}
      R\mapsto G_x(R)=(\End(A_0)\otimes_\Z R)^\times, 
    \end{equation}
    for any commutative ring $R$.
  \item[(ii)] Write $x_\ell:=A_0(\ell)$ for the
    associated $\ell$-divisible group. We definite the automorphism
    groups scheme $G_{x_\ell}$ over $\Z_\ell$ associated to 
    $A_0(\ell)$ in the same way. 
\end{itemize}
      
\npr (3) Any abelian variety $A$ in the isogeny class $[A_0]_{\rm isog}$ of
    $A_0$ can be represented by a pair $(A,\varphi)$, where $\varphi\in 
    \Qisog(A,A_0)$.
    \begin{itemize}
    \item[(i)] Denote by $\wt \calA_x$ the set of all quasi-isogenies
      $(A,\varphi)$ to $A_0$, where we regard two quasi-isogenies
      $(A_i,\varphi_i)$ to $A_0$, for
      $i=1,2$, 
      as the same member if there is an element
      $\alpha\in \Isom(A_1,A_2)$ such that
      $\varphi_1\cdot \alpha=\varphi_2$.
    \item[(ii)] Let
    $\calA_x$ be the set of isomorphism
    classes $[A]$ of all members $(A,\varphi)\in \wt
    \calA_x$. For any prime $\ell$, we define the sets $\wt
    \calA_{x_\ell}$ and $\calA_{x_\ell}$ in the same way.  
    \end{itemize}

\npr (4) Let $(A_1,\varphi_1)$ and $(A_2,\varphi_2)$ be two members in $\wt
    \calA_x$.
    \begin{itemize}
    \item[(i)] $(A_i,\varphi_i)$ ($i=1,2$) are said to be in the same {\it
        genus} if 
    $\Isom(A_1(\ell),A_2(\ell))\neq \emptyset$ for all primes
    $\ell$.
    \item[(ii)] $(A_i,\varphi_i)$ ($i=1,2$) are said to be  
   {\it (globally) equivalent} if there
    is an element $\alpha\in G_x(\Q)$ such that
    $(A_2,\varphi_2)=(A_1,\alpha \cdot \varphi_1)$. It is easy to see
    that they are equivalent if and only if their images in $\calA_x$
    are the same.

  \item[(iii)] For any member $(A,\varphi)\in \wt \calA_x$, let $\wt
    \Lambda(A,\varphi)$ denote the genus in $\wt \calA_x$ 
    that contains $(A,\varphi)$, and 
    \begin{equation}
      \label{eq:smf.6}
      \Lambda(A,\varphi):=\{[A'] \mid \exists\, \varphi' \text{ such that
         }(A',\varphi')\in \wt
      \Lambda(A,\varphi) \}. 
    \end{equation}
    
  \item[(iv)] Let
    \begin{equation}
      \label{eq:smf.7}
      G_{x,\A_f}:=\prod_{\ell}{}  ' \, G_{x_\ell}(\Q_\ell)
    \end{equation}
       be the restricted product of $G_{x_\ell}(\Q_\ell)$ with respect
        to the open compact subgroups $G_{x_\ell}(\Z_\ell)$. For any
        member $(A,\varphi)\in \wt \calA_{x}$ and any element
        $\alpha=(\alpha_\ell)\in G_{x,\A_f}$, there is a unique member
        $(A_1,\varphi_1)\in \wt \calA_x$ such that for all primes
        $\ell$, one has
        $(A_1(\ell),\varphi_{1,\ell})=(A(\ell),\alpha_\ell
        \varphi_\ell)$ 
%
        (see Lemma~\ref{smf.2}). We shall write $\alpha \cdot
        (A,\varphi)=(A_1,\varphi_1)$.    
    
      \item[(v)] Let $H\subset G_{x,\A_f}$ be a subgroup. Say
        $(A_i,\varphi_i)$ ($i=1,2$) are {\it
          $H$-equivalent} if there is an element $\alpha\in H$ such
        that $\alpha(A_1,\varphi_1)=(A_2,\varphi_2)$. It follows from
        the definition that $(A_i,\varphi_i)$ ($i=1,2$) are in the
        same genus if and only if they are $G_{x,\A_f}$-equivalent.

      \item[(vi)] Let $H$ be an algebraic group over $\Q$ together with an
        inclusion $H(\A_f)\subset G_{x,\A_f}$. Two members $(A_i,
        \varphi_i)$ ($i=1,2$) are in the same {\it $H$-genus} if they
        are $H(\A_f)$-equivalent. Thus, we have the notion of
        $G_x$-genus since $G_x(\A_f)\subset G_{x,\A_f}$. 
    \end{itemize}



\end{defn}

\begin{lemma}\label{smf.2}
  For any
        member $(A,\varphi)\in \wt \calA_{x}$ and any
        $\alpha=(\alpha_\ell)\in G_{x,\A_f}$, there is a unique member
        $(A_1,\varphi_1)\in \wt \calA_x$ such that for all primes
        $\ell$, one has
        \begin{equation}
          \label{eq:smf.8}
          (A_1(\ell),\varphi_{1,\ell})=(A(\ell),
           \alpha_\ell\cdot \varphi_\ell).
        \end{equation}
\end{lemma}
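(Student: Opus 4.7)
My plan is to construct $(A_1,\varphi_1)$ by modifying $A$ via a finite flat subgroup scheme $K$ assembled from the local kernels at the finitely many primes where $\alpha_\ell$ is non-integral, and then to deduce uniqueness from the rigidity of quasi-isogenies.

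\textbf{Uniqueness.} Suppose $(A_1,\varphi_1)$ and $(A_1',\varphi_1')$ are two members of $\wt\calA_x$ satisfying \eqref{eq:smf.8}. Then $\beta:=\varphi_1'^{-1}\circ\varphi_1:A_1\to A_1'$ is a rational quasi-isogeny whose induced action on the $\ell$-divisible group $A(\ell)$ is the identity for every prime $\ell$ (under the identifications $A_1(\ell)=A(\ell)=A_1'(\ell)$ provided by the two realizations of the prescribed data). Since a quasi-isogeny of abelian varieties is determined by its action on the union of all $\ell$-divisible groups---torsion is dense---$\beta$ must be a genuine isomorphism, and the two pairs coincide in $\wt\calA_x$.

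\textbf{Existence.} By the defining property of the restricted product, $\alpha_\ell\in G_{x_\ell}(\Z_\ell)$ for all primes $\ell$ outside a finite set $S$; for such $\ell$ the prescribed local datum is automatically realized by the pair $(A(\ell),\alpha_\ell\varphi_\ell)$. Choose a positive integer $N$ such that (a) $N\varphi\in\Hom(A,A_0)$ is a genuine isogeny and (b) $N\alpha_\ell,N\alpha_\ell^{-1}\in\End(A_0(\ell))$ for each $\ell\in S$; this imposes only finitely many conditions. Then at every prime $\ell$, the composite $\psi_\ell:=(N\alpha_\ell)\circ(N\varphi_\ell):A(\ell)\to A_0(\ell)$ is an honest isogeny of $\ell$-divisible groups with finite kernel $K_\ell\subseteq A(\ell)$. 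For primes $\ell$ outside the finite set $S\cup\{\ell\mid N\}$, $\alpha_\ell$ is a unit, $\varphi_\ell$ is already an isomorphism, and multiplication by $N$ is invertible on $A(\ell)$, so $K_\ell=0$. Consequently $K:=\bigoplus_\ell K_\ell$ is a finite flat $k$-subgroup scheme of $A$. Set $A_1:=A/K$ with quotient isogeny $q:A\to A_1$; at each prime $\ell$ the isogeny $\psi_\ell$ factors as $\overline{\psi_\ell}\circ q_\ell$, and I claim the collection $(N^{-2}\overline{\psi_\ell})_\ell$ assembles into a rational quasi-isogeny $\varphi_1:A_1\to A_0$ whose $\ell$-component matches $\alpha_\ell\varphi_\ell$ under the identification $A_1(\ell)\cong A(\ell)$ furnished by $q_\ell^{-1}$ in the quasi-isogeny category.

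\textbf{Main obstacle.} The delicate step is verifying that the prime-by-prime factorings $\overline{\psi_\ell}$ come from a genuine rational morphism $A_1\to A_0$, rather than merely adelic data. I expect to handle this via the bijection between isogenies of abelian varieties and finite flat subgroup schemes, together with the observation that the discrepancy between the prescribed adelic data and the rational quasi-isogeny $\varphi\circ q^{-1}:A_1\to A_0$ is concentrated at the finite set $S\cup\{\ell\mid N\}$ and has been absorbed into the construction of $K$. A cleaner reformulation of the entire argument is available via the correspondence between $\wt\calA_x$ and compatible families of Tate--Dieudonn\'e lattices in the rational space attached to $A_0$; under this correspondence, the action of $\alpha$ becomes coordinate-wise multiplication of lattices, and the resulting adelic lattice canonically determines an abelian variety $A_1$ in the isogeny class of $A_0$.
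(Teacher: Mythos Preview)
Your uniqueness argument is fine and equivalent to the paper's. The existence argument, however, has a real gap at exactly the point you flag as the ``main obstacle,'' and neither of your proposed resolutions closes it.

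The claim that the local factorings $\overline{\psi_\ell}$ assemble into a global quasi-isogeny $A_1\to A_0$ is false in general. Take $A=A_0$, $\varphi=\id$, and let $\alpha$ be multiplication by a fixed prime $\ell_0$ at $\ell_0$ and the identity elsewhere. Then $K=A_0[\ell_0]$, $A_1=A_0/A_0[\ell_0]\simeq A_0$, and your $\overline{\psi_\ell}$ is $[\ell_0]^{-1}$ for $\ell\neq\ell_0$ and the identity at $\ell_0$; no global quasi-isogeny has these localizations. Your first resolution (the discrepancy with $\varphi\circ q^{-1}$ is ``absorbed into $K$'') fails in this same example: $\varphi\circ q^{-1}$ corresponds to $[\ell_0]^{-1}$, which does \emph{not} realize the prescribed datum at $\ell_0$. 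Your second resolution via the lattice correspondence is essentially the statement of the lemma over finite fields and is unavailable over a general base field $k$.

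The paper sidesteps the gluing problem entirely by passing to the dual. Assuming $\alpha_\ell\varphi_\ell$ is an isogeny for every $\ell$ (the general case follows by scaling), it takes $K_\ell:=\ker(\alpha_\ell\varphi_\ell)^t\subset A_0^t(\ell)$, sets $K:=\prod_\ell K_\ell\subset A_0^t$, defines $A_1^t:=A_0^t/K$, and puts $\varphi_1:=\pr^t:A_1\to A_0$. Because $\varphi_1$ is the dual of an honest quotient map, it is \emph{automatically} a global isogeny; the verification of \eqref{eq:smf.8} is then immediate since $\varphi_{1,\ell}^t=\pr_\ell$ and $(\alpha_\ell\varphi_\ell)^t$ have the same kernel in $A_0^t(\ell)$. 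The moral: specifying an isogeny \emph{into} $A_0$ is the same as specifying a finite subgroup of $A_0^t$, so one should build $K$ inside $A_0^t$ rather than inside $A$.
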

\begin{proof}
  We first prove the uniqueness of $(A_1,\varphi_1)$. 
  Note that two members $(A_1,\varphi_1)$ and
  $(A_2,\varphi_2)$ of $\wt \calA_x$  
  are the same if and only if 
  $\ker (N\varphi_1)^t= \ker (N\varphi_2)^t$ as subgroup scheme of
  $A_0^t$ for an integer $N$ such
  that $N\varphi_i$ ($i=1,2$) are isogeny. The uniqueness then is
  characterized by \eqref{eq:smf.8}. 

  Now we construct $(A_1,\varphi_1)$. Assume first that $\alpha_\ell
  \varphi_\ell$ are isogenies for all $\ell$. Let $K\subset A_0^t$ be 
  the product of $\ker (\alpha_\ell \varphi_\ell)^t$ for all $\ell$. 
  Let $A_1$ be
  the abelian variety such that $A_1^t=A_0^t/K$. Put $\varphi_1:=
  \pr^t: A_1\to A_0$, where $\pr: A_0^t \to A_1^t=A_0^t/K$ is the natural
  projection. Then $(A_1,\varphi_1)$ has the desired property. 

  Now we choose an integer $N$ such that $N \alpha_\ell \varphi_\ell$
  is an isogeny for all $\ell$. By what we just proved, there is a unique
  member $(A_1,\varphi_1')$ such that $(A_1(\ell),\varphi_{1,
  \ell}')=(A(\ell), N  \alpha_\ell \varphi_\ell)$ for all $\ell$. Then
  $(A_1,\varphi_1'/N)$ satisfies \eqref{eq:smf.8}. 
\end{proof}

\begin{theorem}\label{smf.3}
  We have $G_x(\A_f)=G_{x,\A_f}$ if one of the following conditions
  holds:
  \begin{itemize}
  \item[(a)] $k$ is a finitely generated field.
  \item[(b)] $\fchar k=p>0$, $A_0$ is supersingular and $k$ is
    sufficiently large for $A_0$.
  \end{itemize}
\end{theorem}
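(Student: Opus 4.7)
The identification $G_x(\A_f)=G_{x,\A_f}$ reduces to showing that the canonical map
\[ \tau_\ell\colon \End(A_0)\otimes_\Z \Z_\ell\longrightarrow \End(A_0(\ell)),\qquad f\mapsto f[\ell^\infty], \]
is an isomorphism after tensoring with $\Q_\ell$ for every prime $\ell$, and is already an integral isomorphism for all but finitely many $\ell$. Granted these two properties, the natural inclusions of $(\End(A_0)\otimes_\Z\wh\Z)^\times$ and of $\prod_\ell \Aut(A_0(\ell))$ into the corresponding restricted products identify $G_x(\A_f)$ with $G_{x,\A_f}$.

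Under hypothesis (a) I invoke Tate's isogeny theorem and its extensions. At every $\ell\neq \fchar k$, integrality of $\tau_\ell$ holds by Tate's theorem when $k$ is finite, by Zarhin when $k$ is finitely generated of positive characteristic, and by Faltings when $k$ is finitely generated of characteristic zero. At $\ell=\fchar k=p$, the parallel integral statement for $A_0[p^\infty]$ and its covariant Dieudonn\'e module is Tate's theorem over finite fields and de Jong's theorem over general finitely generated fields of characteristic $p$. Together these settle both the rational and the almost-everywhere integral isomorphism, finishing case (a).

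Under hypothesis (b) I reduce to case (a) by descent to a finitely generated subfield. First, both $G_x$ and $G_{x,\A_f}$ depend only on the $k$-isogeny class of $A_0$, because each is constructed from $\End^0(A_0)$ together with the rational Tate modules and the isocrystal $N(A_0)$. Since $A_0$ is supersingular and $k$ is sufficiently large for $A_0$, $A_0$ is $k$-isogenous to an abelian variety that, together with all of its endomorphisms, descends to a finitely generated subfield $k_0\subset k$ containing $\F_{p^2}$. Applying case (a) over $k_0$ identifies $\End(A_0)\otimes\Z_\ell$ with $\End(A_0(\ell))$ computed there, and it remains to check that these identifications persist upon base change from $k_0$ to $k$.

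The main obstacle is this last step: enlarging the base field from $k_0$ to $k$ could, a priori, enlarge the Galois-equivariant endomorphism ring $\End_{\Gamma_k}(T_\ell A_0)$ and break the integral equality. The supersingular hypothesis rules this out, because $\End^0(A_0)\otimes \Q_\ell$ already exhausts the full matrix algebra $\Mat_{2g}(\Q_\ell)$ for $\ell\neq p$ and the corresponding matrix algebra over the quaternion division algebra $D_{p,\infty}\otimes\Q_p$ at $\ell=p$; hence the rational isomorphism established over $k_0$ is already maximal and cannot grow further, while the integral equality for almost all $\ell$ is preserved by base change. This completes the reduction and yields the theorem.
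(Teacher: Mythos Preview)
Your proof is correct and follows the same strategy as the paper, which simply cites Tate's homomorphism theorem (Tate, Zarhin, Faltings, de~Jong) for case~(a) and the well-known supersingular analogue for case~(b). Your reduction in the first paragraph to the rational isomorphism of $\tau_\ell$ for all $\ell$ together with the integral isomorphism for almost all $\ell$ is exactly what underlies the equality of the two restricted products.

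One remark on your treatment of~(b): the descent to a finitely generated subfield $k_0$ and the appeal to case~(a) there are unnecessary. Once you know that $\End^0_k(A_0)\cong\Mat_g(D_{p,\infty})$ (an immediate consequence of the supersingular and sufficiently-large hypotheses), the maximality argument in your final paragraph already applies directly over $k$: the injective map
\[
\End^0(A_0)\otimes\Q_\ell\hookrightarrow\End_k(A_0(\ell))\otimes\Q_\ell\subseteq\End_{\Q_\ell}(V_\ell A_0)
\]
has first and last terms both equal to $\Mat_{2g}(\Q_\ell)$ (and analogously $\Mat_g(D_{p,\infty}\otimes\Q_p)$ at $\ell=p$), forcing equality throughout. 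The passage through $k_0$ adds no information. Note also that the paper implicitly asserts the stronger integral statement $\End(A_0)\otimes\Z_\ell\cong\End(A_0(\ell))$ for \emph{every} $\ell$, which is what ``Tate's theorem for supersingular abelian varieties'' gives and what is subsequently needed for $\Stab(x)=G_x(\wh\Z)$ in Proposition~\ref{smf.4}; your argument as written only secures this for almost all $\ell$, which suffices for Theorem~\ref{smf.3} itself but not quite for its downstream use.
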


We say that the ground field $k$ is sufficiently large for an abelian
variety $A$ if $\End(A)=\End(A\otimes_k k_s)$. 


\begin{proof}
Statement (a) follows from Tate's theorem on 
homomorphisms of abelian
varieties, due to Tate, Zarhin, Faltings and de Jong
(cf.~\cite[Theorem 2.1]{yu:smf}. Statement (b)
follows from the validity of Tate's theorem for supersingular abelian
varieties over a sufficiently large field $k$, 
which is well known. 
\end{proof}

\begin{prop}\label{smf.4}
  Let $\Lambda_x:=\Lambda(x)$ denote the set of isomorphism classes in
  the genus containing $x=(A_0,{\rm id})\in \wt \calA_{x}$. Assume
  that one of the conditions in Theorem~\ref{smf.3} holds. 
  Then there is a
  natural isomorphism 
  \begin{equation}
    \label{eq:smf.9} 
    \Lambda_x\simeq G_x(\Q)\backslash G_x(\A_f)/G_x(\wh \Z)
  \end{equation}
  which sends $[A_0]$ to the identity class. 
\end{prop}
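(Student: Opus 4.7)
The plan is to exhibit the genus of $x=(A_0,\mathrm{id})$ inside $\wt\calA_x$ as a single orbit under $G_x(\A_f)$, identify the stabilizer of the base point $x$, and then descend from $\wt\calA_x$ to $\calA_x$ by quotienting out the $G_x(\Q)$-equivalence from Definition~\ref{smf.1}(4)(ii).

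First, by Definition~\ref{smf.1}(4)(i) and (v), two members $(A_1,\varphi_1),(A_2,\varphi_2)\in\wt\calA_x$ lie in the same genus if and only if they are $G_{x,\A_f}$-equivalent in the sense of Lemma~\ref{smf.2}; this uses the tautological fact that an element of $G_{x_\ell}(\Q_\ell)$ is precisely a choice of isomorphism $A_1(\ell)\isoto A_2(\ell)$ after composing with the two given quasi-isogenies. Under either hypothesis of Theorem~\ref{smf.3} we have $G_x(\A_f)=G_{x,\A_f}$, so the genus of $x$ in $\wt\calA_x$ coincides with the single $G_x(\A_f)$-orbit $G_x(\A_f)\cdot x$.

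Next I would compute the stabilizer of $x=(A_0,\mathrm{id})$. An element $\alpha=(\alpha_\ell)\in G_x(\A_f)$ fixes $x$ if and only if, for every prime $\ell$, the pair $(A_0(\ell),\alpha_\ell)$ equals $(A_0(\ell),\mathrm{id})$ in $\wt\calA_{x_\ell}$, which by Definition~\ref{smf.1}(3)(i) is equivalent to $\alpha_\ell\in\Aut(A_0(\ell))=G_{x_\ell}(\Z_\ell)$. Under the identification from Theorem~\ref{smf.3}, the resulting product $\prod_\ell G_{x_\ell}(\Z_\ell)$ matches $G_x(\wh\Z)$; this is where I would spend a moment checking that Tate's theorem not only gives $G_x(\Q_\ell)=G_{x_\ell}(\Q_\ell)$ but also identifies the integral subgroups, which is automatic since $\End(A_0)\otimes\Z_\ell=\End(A_0(\ell))$ under the same hypotheses. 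Consequently the map $\alpha\mapsto\alpha\cdot x$ descends to a bijection
\[
G_x(\A_f)/G_x(\wh\Z)\;\isoto\;\wt\Lambda(x),
\]
where $\wt\Lambda(x)\subset\wt\calA_x$ denotes the genus of $x$.

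Finally, by Definition~\ref{smf.1}(4)(ii), two elements of $\wt\calA_x$ project to the same class in $\calA_x$ if and only if they differ by the left action of $G_x(\Q)\subset G_x(\A_f)$. Passing to the quotient, the previous bijection becomes
\[
G_x(\Q)\backslash G_x(\A_f)/G_x(\wh\Z)\;\isoto\;\Lambda_x,
\]
sending the trivial double coset to $[A_0]$. The main obstacle, and the step that truly requires the hypotheses in Theorem~\ref{smf.3}, is the passage from the formal group $G_{x,\A_f}$ to the algebraic group $G_x(\A_f)$; once Tate's theorem is invoked on both the rational and integral level, the rest is a standard orbit-stabilizer argument together with bookkeeping of the equivalence relations set up in Definition~\ref{smf.1}.
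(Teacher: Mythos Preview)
Your proof is correct and follows essentially the same approach as the paper: identify the genus as the $G_{x,\A_f}$-orbit of $x$, invoke Theorem~\ref{smf.3} to replace $G_{x,\A_f}$ by $G_x(\A_f)$ and to compute the stabilizer as $G_x(\wh\Z)$, then pass to isomorphism classes by quotienting out $G_x(\Q)$. The paper's version is terser, but you have filled in exactly the details it elides, in particular the integral-level identification $\End(A_0)\otimes\Z_\ell=\End(A_0(\ell))$ needed to match $\Stab(x)$ with $G_x(\wh\Z)$.
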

\begin{proof}
  According to Definition~\ref{smf.1} (4) (iii)-(v), the genus
  containing $x$ is $G_{x,\A_f}/\Stab(x)$. Under the condition of (a)
  or (b), we have $G_{x,\A_f}=G_x(\A_f)$ and $\Stab(x)=G_x(\hat \Z)$
  by Theorem~\ref{smf.3}. It is clear that
   two members in the genus are isomorphic
  if and only if they are $G_x(\Q)$-equivalent. This proves the
  proposition. 
\end{proof}


\subsection{Genera and idealcomplexes of abelian varieties with
  additional structures}
\label{sec:idealc}

We consider a PEL-type setting. Let $(B,*)$ be finite-dimensional
semi-simple $\Q$-algebra with positive involution $*$, and $O_B\subset
B$ be a $\Z$-order stable under the involution $*$. When $O_B=\Z$,
this setting specializes to the consideration of polarized abelian
varieties.  

\begin{defn}\label{smf.5}
(1) 
\begin{itemize}
\item[(i)] 
A {\it polarized
$O_B$-abelian variety} is a triple $\ul A=(A,\lambda,\iota)$, 
where $(A,\lambda)$
is a polarized abelian variety and $\iota:O_B\to \End(A)$ is a ring
monomorphism such that $\iota(a)^t \lambda=\lambda \iota(a^*) $ for
all $a\in O_B$. 

\item[(ii)] A {\it fractional polarization} or {\it
  $\Q$-polarization}
 is an element
$\Hom(A, A^t)\otimes_\Z \Q$ such that $N \lambda$ is a polarization
for some positive integer $N$; we call a fractional polarization {\it
  integral} if $N$ can be chosen to $1$, i.e. it is a polarization.

\item[(iii)] A {\it fractionally
polarized (or\, $\Q$-polarized) $O_B$-abelian variety} is a triple
$(A,\lambda', \iota)$ such that $(A,N \lambda', \iota)$ is a polarized
$O_B$-abelian variety for some positive integer $N$.
  

\item[(iv)]
For any prime $\ell$, write $\ul A(\ell)=(A(\ell),
\lambda_\ell,\iota_\ell)$, where $\lambda_\ell: A(\ell)\to
A^t(\ell)=A(\ell)^t$ is induced morphism for $\ell$-divisible groups,
and $\iota_\ell: O_B\otimes \Z_\ell \to \End(A)\otimes \Z_\ell \to
\End(A(\ell))$ is the $\Z_\ell$-linear extension of $\iota$. 
\end{itemize}

\npr (2) Let $\ul A_1=(A_1,\lambda_1,\iota_1)$ and $\ul
  A_2=(A_2,\lambda_2,\iota_2)$ be two polarized $O_B$-abelian
  varieties over a field $k$. 
  \begin{itemize}
  \item[(i)] Denote by $\Qisog(\ul A_1,\ul A_2)$ (resp.~$\Gisom(\ul A_1,\ul
    A_2)$) the set of all
    $O_B$-linear quasi-isogenies (resp.~isomorphism) 
   $\varphi:A_1\to A_2$ such that
    $\varphi^* \lambda_1=\lambda_2$ (resp.~$\varphi^*
    \lambda_1=c \lambda_2$ for some $c\in \Q^\times$). 

  \item[(ii)] For any prime $\ell$, denote by
    $\Qisog(\ul A_1(\ell),\ul A_2(\ell))$ (resp.~$\Gisom(\ul A_1(\ell),\ul
    A_2(\ell))$) the set of all $O_B\otimes
    \Z_\ell$-linear quasi-isogenies (resp.~isomorphism) 
  $\varphi_\ell: A_1(\ell)\to
    A_2(\ell)$ such that $\varphi_\ell^*
    \lambda_{2,\ell}=\lambda_{1,\ell}$ (resp.~$\varphi_\ell^*
    \lambda_{2,\ell}=c_\ell \lambda_{1,\ell}$ for some $c_\ell\in
    \Q_\ell^\times$). 
  
  \item[(iii)] We say $\ul A_1$ and $\ul A_2$ (resp.~$\ul A_1(\ell)$ and 
  $\ul A_2(\ell)$) are {\it isogenous} if $\Qisog(\ul A_1, \ul
  A_2)\neq \emptyset$ (resp.~$\Qisog(\ul A_1(\ell), \ul
  A_2(\ell))\neq \emptyset$). 
  
  \item[(iv)] We say $\ul A_1$ and $\ul A_2$ (resp.~$\ul A_1(\ell)$ and 
  $\ul A_2(\ell)$) are {\it similar} if 
  $\Gisom(\ul A_1, \ul
  A_2)\neq \emptyset$ (resp.~$\Gisom(\ul A_1(\ell), \ul
  A_2(\ell))\neq \emptyset$).  
  \end{itemize}

\npr (3) Let $\ul x=\ul A_0=(A_0,\lambda_0,\iota_0)$ be a polarized
    $O_B$-abelian variety over $k$. 
    \begin{itemize}
    \item[(i)] Denote by $G_{\ul x}$ the automorphism group scheme
      over $\Z$ 
      associated to $\ul A_0$. It is a group scheme which represents
      the functor 
      \begin{equation}
        \label{eq:smf.10}
        R\mapsto G_{\ul x}(R)=\{g\in (\End_{O_B}(A_0)\otimes_\Z
        R)^\times \mid g^t \lambda g=\lambda\}. 
      \end{equation}
    \item[(ii)] Denote by $GU_{\ul x}$ the group scheme of similitudes over
      $\Z$ 
      associated to $\ul A_0$. It is a group scheme which represents
      the functor 
      \begin{equation}
        \label{eq:smf.11}
        R\mapsto GU_{\ul x}(R)=\{g\in (\End_{O_B}(A_0)\otimes_\Z
        R)^\times \mid \exists\, c(g)\in R^\times\ {\rm s.t.}\ g^t 
       \lambda g=\lambda c(g)\}. 
      \end{equation}
    \item[(iii)] For any prime $\ell$, write $\ul x_\ell=\ul A_0(\ell)$ and
      define the group schemes $G_{\ul x_\ell}$ and $GU_{\ul x_\ell}$
      over $\Z_\ell$ in the same way.
    \end{itemize}

\npr (4) Let $x=A_0$ be the underlying abelian variety of $\ul x=\ul A_0$.
\begin{itemize}
\item[(i)] $  \wt \calA_{\ul x}^r:=\left \{(\ul A,\varphi) \,  {\Bigg |}  \, 
\parbox{2in}{$\ul A$ is a
  $\Q$-polarized $O_B$-abelian variety over $k$ and $\varphi\in
  \Qisog(\ul A, \ul A_0)$} \right \}\subset \wt \calA_x$.

\item[(ii)] Let $G_{\ul x,\A_f}:=\prod_{\ell}' G_{\ul x_\ell}(\Q_\ell)$ and
  $GU_{\ul 
  x,\A_f}:=\prod_{\ell}' GU_{\ul x_\ell}(\Q_\ell)$ be the restricted
  products of local groups $G_{\ul x_\ell}(\Q_\ell)$ and $GU_{\ul
    x_\ell}(\Q_\ell)$ with respect to $G_{\ul x_\ell}(\Z_\ell)$ and
  $GU_{\ul x_\ell}(\Z_\ell)$, respectively. The group action of
  $G_{x,\A_f}$ on $\wt \calA_x$ induces a group action of 
  $G_{\ul x,\A_f}$ and of $GU_{\ul x,\A_f}$ on $\wt \calA_{\ul x}^r$.

\end{itemize}

\npr (5) Let $(\ul A_1,\varphi_1)$ and $(\ul A_2, \varphi_2)$ be two
  members of $\wt \calA_{\ul x}^r$.

\begin{itemize}
\item[(i)] $(\ul A_i,\varphi_i)$ ($i=1,2$) are said to be
  in the same {\it genus} if $\Isom(\ul A_1(\ell),\ul A_2(\ell))\neq
  \emptyset$ for all primes $\ell$. It is easy to see that two
  members are in the same genus (resp.~are isomorphic) 
  if and only they are $G_{\ul
  x,\A_f}$-equivalent (resp.~$G_{\ul x}(\Q)$-equivalent). The
  isomorphism class of $\ul A_1$ is denoted by $[\ul A_1]$.

\item[(ii)] $(\ul A_i,\varphi_i)$ ($i=1,2$) are said to be
  in the same {\it idealcomplex}\footnote{Our terminology borrows from the similar notion in the theory of lattices in \cite[p.~10]{ponomarev:aa1976}} if $\ul A_1(\ell)$ and $\ul
  A_2(\ell)$ are similar 
  for all primes $\ell$. It is easy to see that two
  members are in the same idealcomplex (resp.~are similar) if and
  only they are $GU_{\ul x,\A_f}$-equivalent 
  (resp.~are $GU_{\ul x}(\Q)$-equivalent). The similitude class of
  $\ul A_1$ is denoted by $[\ul A_1]_s$.

\item[(iii)] $(\ul A_i,\varphi_i)$ ($i=1,2$) are said to be
  in the same {\it similitude genus} if there is an element $\alpha\in
  GU_{\ul x}(\Q)$ such that the members 
  $\alpha \cdot (\ul A_1,\varphi_1)$ and 
  $(\ul A_2,\varphi_2)$ are in the same genus. 
  \end{itemize}

\npr (6) Let $\ul x'=(\ul A', \varphi')$  be a member of $\wt \calA_{\ul
    x}^r$.
\begin{itemize}
\item[(i)] Let 
  $\wt \Lambda_{\ul x'}\subset \wt \calA_{\ul x}^r$ be the genus 
  containing $(\ul A', \varphi')$,
  and $\Lambda_{\ul x'}:=[\wt \Lambda_{\ul x'}]$ the set of
  isomorphism classes of all members in $\wt \Lambda_{\ul x'}$. One 
  has $\Lambda_{\ul x'}=G_{\ul x}(\Q)\backslash \wt \Lambda_{\ul x'}$.

\item[(ii)] Let 
  $\wt \calI_{\ul x'}\subset \wt \calA_{\ul
    x}^r$ be the idealcomplex containing $(\ul A', \varphi')$,
  and $\calI_{\ul x'}:=[\wt \calI_{\ul x'}]_s$ the set of
  similitude classes of all members in $\wt \calI_{\ul x'}$. One has 
  $\calI_{\ul x'}=GU_{\ul x}(\Q)\backslash \wt \calI_{\ul x'}$.

\item[(iii)] Let 
  $\wt \Lambda^s_{\ul x'}\subset \wt \calA_{\ul x}^r$ be the similitude 
  genus 
  containing $(\ul A', \varphi')$,
  and $\Lambda_{\ul x'}^s:=[\wt \Lambda^s_{\ul x'}]_s$ the set of
  similitude classes of all members in $\wt \Lambda^s_{\ul x'}$. One 
  has $\Lambda^s_{\ul x'}=GU_{\ul x}(\Q)\backslash \wt 
  \Lambda^s_{\ul x'}$. 
\end{itemize}

\end{defn}

We now introduce the notion of basic abelian varieties with additional
structures. The concept is originally defined in
Kottwitz~\cite{kottwitz:isocrystals} in the content of isocrystals
with $G$-structures and the framework of Tannakian categories. 
Basic abelian varieties with additional
structures are first introduced in Rapoport and Zink
\cite{rapoport-zink} which rely on the construction of integral
models of PEL-type Shimura varieties. 
A convenient definition which does not require
the background on Shimura varieties is as follows (see \cite{yu:smf}).


\begin{defn} \label{5.6} Let $(B,*)$ and $O_B$ remain as above.

{\rm (1)}  
  Let $(V_p,\psi_p)$ be a $\Q_p$-valued non-degenerate skew-Hermitian
  $B_p$-module, where $B_p:=B\otimes_{\Q} \Q_p$. A polarized $O_B$-abelian
  variety $\ul A=(A,\lambda,\iota)$ over an \ac field
  $k$ of \ch $p$
   is said
  to be {\it related to $(V_p,\psi_p)$} if there is a 
  $B_p\otimes_{\Qp} L$-linear isomorphism 
  $\alpha:M(\ul A)\otimes_W L\simeq
  (V_p,\psi_p)\otimes_{\Qp} L$ which preserves the pairings up to a scalar
  in $L^\times$, where $W$ is the ring of Witt vectors over $k$, $L$
  the fraction field of $W$, and  
  $M(\ul A)$ is the
  covariant \dieu module with additional structures associated to $\ul A$. 

  Let $G':=GU_{B_p}(V_p,\psi_p)$ be the algebraic
  group over $\Q_p$ of $B_p$-linear similitudes on $(V_p, \psi_p)$. 
  By transporting the structure of the Frobenius map on $V_p\otimes L$, we
  obtain an element $b\in G'(L)$ such that 
  \begin{equation}\label{eq:smf.115}
\alpha:M(\ul A)\otimes L \simeq (V_p\otimes L, \psi_p, b({\rm id}\otimes
  \sigma))    
  \end{equation}
is an isomorphism of isocrystals with additional structures. 
  The decomposition of $V_p\otimes L$ into
  isoclinic components induces a $\Q$-graded structure, and thus
  defines a (slope) homomorphism $\nu_{b}:\D\to G'$ over $L$, 
  where $\D$ is the pro-torus over
  $\Q_p$ with character group $\Q$.    

   {\rm (2)} A polarized $O_B$-abelian variety $\ul A$ over an \ac
  field $k$ of \ch $p$ is said to  {\it
  basic with respect to $(V_p,\psi_p)$} if 
\begin{itemize}
\item [(a)] $\ul A$ is related to $(V_p,\psi_p)$, and
\item [(b)] the slope homomorphism $\nu_b$ is central.
\end{itemize} 
Note that this is independent of the choice of $\alpha$ and $b$.
We call $\ul A$ {\it basic} if it is basic
  with respect to $(V_p,\psi_p)$ for some skew-Hermitian space
  $(V_p,\psi_p)$.


{\rm (3)} An polarized $O_B$-abelian variety $\ul A$ over an arbitrary
  field $k$ of \ch $p$ is said to be  {\it basic} if the base change 
  $\ul A\otimes_k \bar k$ over its algebraic closure $\bar k$ is basic
\end{defn}

One can choose a suitable isomorphism $\alpha$ in \eqref{eq:smf.115} such
that the slope homomorphism $\nu_b:\bfD\to G'$ is defined over
$\Q_{p^s}$ for some positive integer $s$ 
\cite[Section 4.3]{kottwitz:isocrystals}. However, 
this fact is not needed in Definition~\ref{5.6}. 
It is clear that the notion of basic abelian varieties with additional
structures is preserved under isogeny. By \cite[Theorem
1.1]{yu:avf_basic}, any basic abelian varieties with additional
structures over an arbitrary \ac field of \ch $p$ is isogenous to
another one which is defined a finite field. It follows that the
notion of basic abelian varieties with additional structures does not
depend on the ground field over which they are defined. 


\begin{theorem}\label{smf.6}
  Assume one of the following conditions
  holds:
  \begin{itemize}
  \item[(a)] $k$ is a finitely generated field.
  \item[(b)] $\fchar k=p>0$, $\ul A_0$ is basic  and $k$ is
    sufficiently large for $A_0$.
  \end{itemize}
We have $G_{\ul x}(\A_f)=G_{\ul x,\A_f}$ and 
  $GU_{\ul x}(\A_f)=GU_{\ul x,\A_f}$.
\end{theorem}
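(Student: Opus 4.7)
The plan is to reduce Theorem~\ref{smf.6} to Theorem~\ref{smf.3} by observing that both $G_{\ul x}$ and $GU_{\ul x}$ are cut out inside $G_x$ by conditions that can be tested prime by prime. All the analytic content will already be packaged inside Theorem~\ref{smf.3}; one only needs to keep track of the extra structure.

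First I would invoke Theorem~\ref{smf.3} to obtain $G_x(\A_f) = G_{x,\A_f}$, which is equivalent, in the cases (a) and (b), to the identification $\End(A_0) \otimes_\Z \Z_\ell \isoto \End(A_0(\ell))$ at every prime $\ell$ (Tate--Zarhin--Faltings--de Jong in case (a); Tate's theorem for supersingular abelian varieties over a sufficiently large field in case (b)). Next I would pass to $O_B$-equivariant parts: an element of $\End(A_0) \otimes \Z_\ell$ commutes with $\iota(O_B)$ if and only if its image in $\End(A_0(\ell))$ commutes with $\iota_\ell(O_B \otimes \Z_\ell)$, since $\iota_\ell$ is by definition the $\Z_\ell$-linear extension of $\iota$. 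This yields
\[\End_{O_B}(A_0) \otimes_\Z \Z_\ell \isoto \End_{O_B \otimes \Z_\ell}(A_0(\ell))\]
for every $\ell$, hence the analogous isomorphism on the groups of units after extension of scalars to $\A_f$.

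The polarization $\lambda$ induces a Rosati involution $*$ on $\End^0(A_0)$, and the $\ell$-adic polarization $\lambda_\ell$ induces the corresponding local Rosati involution on $\End^0(A_0(\ell))$; the two agree under the isomorphism above. The defining condition $g^t \lambda g = \lambda$ for $G_{\ul x}$ translates to $g^* g = 1$, and this condition on an element $g = (g_\ell) \in (\End_{O_B}(A_0) \otimes \A_f)^\times$ holds if and only if it holds in every $\ell$-adic component. This gives $G_{\ul x}(\A_f) = \prod_\ell{}'\, G_{\ul x_\ell}(\Q_\ell) = G_{\ul x, \A_f}$. The same argument applies to $GU_{\ul x}$: the similitude relation $g^t \lambda g = \lambda c(g)$ becomes $g^* g = c(g)$ with $c(g)$ lying in the center. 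Given $g = (g_\ell) \in GU_{\ul x,\A_f}$, the local similitude factors $c_\ell := c(g_\ell) \in \Q_\ell^\times$ assemble into an adele, because the restricted product condition $g_\ell \in GU_{\ul x_\ell}(\Z_\ell)$ for almost all $\ell$ forces $c_\ell \in \Z_\ell^\times$ for almost all $\ell$.

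The main (and quite mild) obstacle is the bookkeeping needed to pass between equations at the level of endomorphism algebras (i.e. in $\End^0(A_0) \otimes R$) and the group-scheme points of $G_{\ul x}$ and $GU_{\ul x}$; once one checks that the Rosati involution, the $O_B$-action, and the similitude factor all behave compatibly under the $\ell$-adic completion, the conclusion is immediate from Theorem~\ref{smf.3}.
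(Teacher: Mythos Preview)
Your reduction to Theorem~\ref{smf.3} is exactly what the paper does for case~(a), and the bookkeeping you describe (taking $O_B$-invariants and imposing the Rosati condition componentwise) is the correct way to spell out ``follows directly from Theorem~\ref{smf.3}''.

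There is, however, a real gap in case~(b). You invoke Theorem~\ref{smf.3}(b), whose hypothesis is that $A_0$ is \emph{supersingular}, whereas the hypothesis of Theorem~\ref{smf.6}(b) is only that $\ul A_0$ is \emph{basic}. These are not the same: basic means the slope homomorphism $\nu_b$ lands in the center of $G'=GU_{B_p}(V_p,\psi_p)$, and when $B$ is large (e.g.\ a CM field of degree $2\dim A_0$) that center can be a full torus, so every object is basic while $A_0$ may be ordinary. In such situations $\End(A_0)\otimes\Z_\ell \to \End(A_0(\ell))$ is typically \emph{not} an isomorphism over an algebraically closed $k$ (the Galois group is trivial, so the right-hand side is huge), and hence $G_x(\A_f)\neq G_{x,\A_f}$. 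Your argument needs this intermediate equality before you can cut down by the $O_B$-action and the polarization, so it breaks.

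What is actually true, and what the paper invokes from \cite[Theorem~3.12 and Proposition~4.5]{yu:smf}, is an $O_B$-linear version of Tate's theorem valid for basic $\ul A_0$: one has $\End_{O_B}(A_0)\otimes\Q_\ell \simeq \End_{O_B\otimes\Z_\ell}(A_0(\ell))\otimes\Q_\ell$ directly, without passing through the ambient $\End(A_0)$. Proving this uses the basic hypothesis in an essential way (roughly, basicness forces the relevant automorphism group of the isocrystal with $B$-structure to be an inner form of $G'$, and this is what makes the $p$-adic comparison work). So for (b) you cannot simply recycle Theorem~\ref{smf.3}; you need the stronger input from \cite{yu:smf}.
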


\begin{proof}
  Statement (a) follows directly from
  Theorem~\ref{smf.3}. Statement (b) is proved in 
  \cite[Theorem 3.12 and Proposition 4.5]{yu:smf}. 
\end{proof}

Let $G$ be a linear algebraic group over $\Q$ and $U\subset G(\A_f)$
an open compact subgroup. Denote by ${\rm DS}(G,U)$ for the double
coset space $G(\Q)\backslash G(\A_f)/U$. This is a finite set by a
finiteness result of Borel and Harish-Chandra \cite{Borel-HC:1962}. 
Assume that any arithmetic
subgroup of $G(\Q)$ is finite. Let $c_1,c_2,\dots, c_h$ be a complete
coset representatives for ${\rm DS}(G,U)$ and put $\Gamma_i:=G(\Q)\cap
c_i U c_i^{-1}$ for $i=1,\dots, h$. Define the mass of $(G,U)$ by 
\begin{equation}
  \label{eq:smf.12}
  \Mass(G,U):=\sum_{i=1}^h |\Gamma_i|^{-1}.
\end{equation}

For any finite set $S$ consisting of objects with finite automorphism
groups, define the mass of $S$ by
\begin{equation}
  \label{eq:smf.13}
  \Mass(S):=\sum_{s\in S} |\Aut(s)|^{-1}.
\end{equation}

\begin{theorem}\label{smf.7}
  Let $\Lambda_{\ul x}$ (resp.~$\calI_{\ul x}$; resp.~$\Lambda_{\ul x}^s$) 
  the set of isomorphism classes (resp. similitude classes) in
  the genus (resp.~the idealcomplex; resp.~the similitude genus) 
  containing the base member $\ul x=(\ul A_0,{\rm id})\in \wt
  \calA_{\ul x}^r$. Assume 
  that one of conditions in Theorem~\ref{smf.6} holds. \\
{\rm (1)} There are natural isomorphisms sending the base class to the
  identity class: 
  \begin{align}
    \label{eq:smf.14}
    \Lambda_{\ul x}  & \simeq {\rm DS}(G_{\ul x},G_{\ul x}(\wh \Z)), \\  
    \label{eq:smf.15}
    \calI_{\ul x}  & \simeq {\rm DS}(GU_{\ul x},GU_{\ul x}(\wh \Z)). \\
    \label{eq:smf.16}
    \Lambda_{\ul x}^s  &\simeq GU_{\ul x}(\Q)\backslash GU_{\ul x}(\Q) G_{\ul
    x}(\A_f)/G_{\ul x}(\wh \Z).  
  \end{align}
{\rm (2)} We have 
\begin{align}
  \label{eq:smf.17}
  \Mass(\Lambda_{\ul x}) & =\Mass(G_{\ul x},G_{\ul x}(\wh \Z)),\\
   \label{eq:smf.18}
  \Mass(\calI_{\ul x}) & =\Mass(GU_{\ul x},GU_{\ul x}(\wh \Z)),\\
   \label{eq:smf.19}
  \Mass(\Lambda^s_{\ul x}) & =\sum_{i=1}^h |\Gamma_i|^{-1}, 
\end{align}
where $c_1,\dots, c_h$ are coset representatives for 
$GU_{\ul x}(\Q)\backslash GU_{\ul x}(\Q) G_{\ul
    x}(\A_f)/U$, $U=G_{\ul x}(\wh \Z)$, and $\Gamma_i:=GU_{\ul
    x}(\Q)\cap c_i U c_i^{-1}$.
\end{theorem}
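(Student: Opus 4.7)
The plan is to deduce all three isomorphisms in part (1) from a single orbit-stabilizer analysis on $\wt\calA_{\ul x}^r$, and then to read off the mass formulas in part (2) from the usual stabilizer count on a double coset space.

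First I would extend Lemma~\ref{smf.2} to the PEL setting: for $(\ul A,\varphi)\in\wt\calA_{\ul x}^r$ and $\alpha=(\alpha_\ell)\in G_{\ul x,\A_f}$ (resp.\ $GU_{\ul x,\A_f}$), the abelian variety $A_1$ produced in Lemma~\ref{smf.2} inherits the polarization and $O_B$-action by transport of structure along $\varphi_1$, which is compatible locally because each $\alpha_\ell$ preserves (resp.\ preserves up to scalar) the local polarization and $O_B$-structure. This gives commuting actions of $G_{\ul x,\A_f}$ and $GU_{\ul x,\A_f}$ on $\wt\calA_{\ul x}^r$. By Definition~\ref{smf.5}(5)(i)--(ii), the orbit of the base point $\ul x=(\ul A_0,\mathrm{id})$ under $G_{\ul x,\A_f}$ is exactly $\wt\Lambda_{\ul x}$ and its orbit under $GU_{\ul x,\A_f}$ is exactly $\wt\calI_{\ul x}$. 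The stabilizer of $\ul x$ in $G_{\ul x,\A_f}$ is $\prod_\ell G_{\ul x_\ell}(\Z_\ell)$, since $\alpha\cdot\ul x=\ul x$ forces $\alpha_\ell\in\Aut(\ul A_0(\ell))=G_{\ul x_\ell}(\Z_\ell)$; similarly the stabilizer in $GU_{\ul x,\A_f}$ is $\prod_\ell GU_{\ul x_\ell}(\Z_\ell)$. Invoking Theorem~\ref{smf.6} to identify $G_{\ul x,\A_f}=G_{\ul x}(\A_f)$ and $GU_{\ul x,\A_f}=GU_{\ul x}(\A_f)$, we obtain natural bijections
\[
\wt\Lambda_{\ul x}\simeq G_{\ul x}(\A_f)/G_{\ul x}(\wh\Z),\qquad
\wt\calI_{\ul x}\simeq GU_{\ul x}(\A_f)/GU_{\ul x}(\wh\Z).
\]
Passing to isomorphism classes (resp.\ similitude classes) means quotienting by the left action of $G_{\ul x}(\Q)$ (resp.\ $GU_{\ul x}(\Q)$), which yields \eqref{eq:smf.14} and \eqref{eq:smf.15} with the base class mapping to the identity coset.

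For \eqref{eq:smf.16}, I would observe by Definition~\ref{smf.5}(5)(iii) that $\wt\Lambda_{\ul x}^s=\bigcup_{\gamma\in GU_{\ul x}(\Q)}\gamma\cdot\wt\Lambda_{\ul x}=GU_{\ul x}(\Q)\cdot G_{\ul x}(\A_f)\cdot\ul x$. Two elements $\gamma_1 g_1\cdot\ul x$ and $\gamma_2 g_2\cdot\ul x$ of $\wt\calA_{\ul x}^r$ coincide iff $g_2^{-1}\gamma_2^{-1}\gamma_1 g_1$ lies in the $\wt\calA_{\ul x}^r$-stabilizer of $\ul x$, which is $G_{\ul x}(\wh\Z)$; hence $\wt\Lambda_{\ul x}^s\simeq GU_{\ul x}(\Q)G_{\ul x}(\A_f)/G_{\ul x}(\wh\Z)$. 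Quotienting by the left $GU_{\ul x}(\Q)$-action (i.e.\ similarity) gives \eqref{eq:smf.16}.

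Part (2) is then a standard orbit-stabilizer mass count. Choosing coset representatives $c_1,\dots,c_h$ for the relevant double coset space and using the bijection of part (1), the $i$-th class is represented by $c_i\cdot\ul x$, whose automorphism group (for the genus/idealcomplex cases) or similitude automorphism group (for the similitude genus) is in bijection with $G_{\ul x}(\Q)\cap c_i G_{\ul x}(\wh\Z) c_i^{-1}$ (resp.\ $GU_{\ul x}(\Q)\cap c_i GU_{\ul x}(\wh\Z) c_i^{-1}$, resp.\ $\Gamma_i$ as stated): an automorphism of $\ul A_i$ pulls back via $c_i$ to a global element that is integral at every prime. Summing the reciprocals gives \eqref{eq:smf.17}--\eqref{eq:smf.19}. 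The main obstacle is the bookkeeping in paragraph two, namely verifying that the similitude genus (a mixed global/adelic construction) really has the claimed stabilizer $G_{\ul x}(\wh\Z)$ inside the larger group $GU_{\ul x}(\Q)G_{\ul x}(\A_f)$ rather than an enlargement by global units; this relies critically on Theorem~\ref{smf.6} to identify the pointwise-adelic stabilizer with the integral adelic group of $G_{\ul x}$, and on the compatibility of $GU_{\ul x}(\Q)$-translation with the $G_{\ul x}(\A_f)$-action.
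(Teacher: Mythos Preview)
Your proposal is correct and follows essentially the same orbit--stabilizer route as the paper; the paper delegates \eqref{eq:smf.14} and \eqref{eq:smf.17} to \cite{yu:smf}, argues \eqref{eq:smf.15} and \eqref{eq:smf.18} by the identical method using $GU_{\ul x,\A_f}=GU_{\ul x}(\A_f)$ from Theorem~\ref{smf.6}, and obtains \eqref{eq:smf.16} exactly as you do via $\wt\Lambda^s_{\ul x}=GU_{\ul x}(\Q)\cdot\wt\Lambda_{\ul x}$.

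One remark on the ``main obstacle'' you flag at the end. You assert that the stabilizer of $\ul x$ relevant to $\wt\Lambda^s_{\ul x}$ is $G_{\ul x}(\wh\Z)$, whereas a priori the stabilizer of $\ul x$ in $GU_{\ul x}(\A_f)$ is the larger group $GU_{\ul x}(\wh\Z)$. What actually makes the identification \eqref{eq:smf.16} go through is this: if $h_1,h_2\in GU_{\ul x}(\Q)G_{\ul x}(\A_f)$ satisfy $h_1\cdot\ul x=h_2\cdot\ul x$, then $h_2^{-1}h_1\in GU_{\ul x}(\wh\Z)$ and its similitude factor lies in $\Q^\times\cap\wh\Z^\times=\{\pm 1\}$; but the Rosati involution is positive, so $c(g)=g'g>0$ for every $g\in GU_{\ul x}(\Q)$, forcing $c(h_2^{-1}h_1)=1$ and hence $h_2^{-1}h_1\in G_{\ul x}(\wh\Z)$. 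The paper suppresses this point as well, simply writing ``by definition'', so you are not missing anything the paper supplies---but this positivity argument is the actual content behind the step you singled out.
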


\begin{proof}
Statements \eqref{eq:smf.14} and \eqref{eq:smf.17} are proved in 
\cite[Theorems 2.2 and 4.6]{yu:smf}. The same proofs also prove
\eqref{eq:smf.15} and \eqref{eq:smf.18}, because we have 
$GU_{\ul x,\A_f}=GU_{\ul x}(\A_f)$ for cases (a) and (b) in 
Theorem~\ref{smf.6}. By definition, 
\[\wt \Lambda^s_{\ul x}=GU_{\ul x}(\Q) \cdot \wt \Lambda _{\ul
  x}=GU_{\ul x}(\Q) G_{\ul 
    x}(\A_f)/G_{\ul x}(\wh \Z),\] and we get
  \eqref{eq:smf.16}. Formula \eqref{eq:smf.19} follows from
  \eqref{eq:smf.16} 
  and \eqref{eq:smf.18} because $\Lambda^s_{\ul x}$ is a subset of
  $\calI_{\ul x}$.  
\end{proof}

\subsection{Principally polarized abelian varieties over finite
  fields}
\label{sec:smf.4}

Let $\ul x_0=(A_0,\lambda_0)$ be a principally polarized abelian
variety over 
$\Fq$. Then the group schemes $GU_{\ul x_0}$ and $G_{\ul x_0}$ 
represents the functor 
\begin{equation}
  \label{eq:smf.20}
  \begin{split}
  &R \mapsto \{x\in (\End(A_0)\otimes R)^\times |\, x x'\in R^\times
  \},\quad \text{and}    \\
  &R \mapsto \{x\in (\End(A_0)\otimes R)^\times |\, x x'=1
  \},  
  \end{split}
\end{equation}
respectively, where $'$ is the Rosati involution on $\End(A_0)$
induced by $\lambda_0$. We denote by $\calI_{\ul x_0}$ the set of
similitude classes of the idealcomplex containing $(A_0,\lambda_0)$, 
and by $\Lambda_{\ul x_0}$ the set of isomorphism classes of the genus
containing $(A_0,\lambda_0)$. By Theorem~\ref{smf.7}, we have 
natural bijections  

\begin{equation}\label{eq:smf.21}
  \begin{split}
  \Lambda_{\ul x_0}&\simeq G_{\ul x_0}(\Q)\backslash G_{\ul
    x_0}(\A_f)/G_{\ul x_0}(\wh \Z), \quad     \\
  \calI_{\ul x_0}&\simeq GU_{\ul x_0}(\Q)\backslash GU_{\ul
    x_0}(\A_f)/GU_{\ul x_0}(\wh \Z).
  \end{split}
\end{equation}
 
We call an abelian variety $A$ over a finite field \emph{principal} if
the endomorphism algebra $\End^0(A)$ of $A$ is commutative and the
endomorphism ring $\End(A)$ is the maximal order.\footnote{This notion is more general than that defined in \cite[Section 5]{waterhouse:thesis}; the abelian variety $A$ does not need to be simple here.} A genus of abelian varieties which contains
 a principal abelian variety is called a principal genus.

Suppose that $A_0$ is principal. Then $\End^0(A_0)$ is a CM algebra
$K=\prod_{i=1}^r K_i$ with each $K_i$ a CM field, and $'$ is the canonical
involution on $K$. Let $K^+=\prod_{i=1}^r K^+_i$ be the maximal totally
real subalgebra, which is the fixed subalgebra of $'$. We denote 
the maximal orders of $K$ and $K^+$ by $O_K$ and $O_{K^+}$, 
respectively. Let $T=T^{K,\Q}$ be the algebraic ``torus'' over
$\Z$ defined by $T(R)=\{x\in (O_K\otimes_{\Z} R)^\times |
N_{K/K^+}(x)=x x'\in 
R^\times\}$ for any commutative ring $R$. Let $T^{K,1}$ be the 
$K/K^+$-norm one
subtorus of $T$. 
It follows from the definition that $G_{\ul x_0}=T^{K,1}$ and $GU_{\ul x_0}=T^{K,\Q}$.
Recall the class number $h_T$ of $T$ is the cardinality of 
$T(\Q)\backslash T(\A_f)/U_T$, where $U_T$ is the maximal open compact
subgroup of 
$T(\A_f)$.  
Since $T(\wh \Z)$ and $T^{K,1}(\wh \Z)$ are maximal
open compact subgroups in the adelic groups, by \eqref{eq:smf.21}, we
get
\begin{equation}
  \label{eq:smf.22}
  |\Lambda_{\ul x_0}|=h_{T^{K,1}}\quad  
  \text{and }\quad |\calI_{\ul x_0}|=h_{T}.
\end{equation}

\begin{prop}\label{smf.9}
  Let $K=\prod_{i=1}^r K_i$ be a CM algebra with maximal totally real
  subalgebra $K^+=\prod_{i=1}^r K_i^+$. 
  
{\rm (1)} We have
\begin{equation}
  \label{eq:smf.23}
  h_{T^{K,1}}=\frac{h_K}{h_{K^+}}\cdot \frac{1}{Q\cdot 2^{t-r}}, 
\end{equation}
where $h_K:=|\Pic(O_K)|$ (resp.~$h_{K^+}$) is the class number of $K$
(resp.~$K^+$), $Q:=[O^\times_K:\mu_K \cdot O^\times_{K^+}]$ is the
Hasse unit index, and $t$ is the number of finite places of $K^+$
ramified in $K$.

{\rm (2)} We have 
\begin{equation}
  \label{eq:smf.24}
  h_{T^{K,\Q}}=\frac{h_K}{h_{K^+}}\cdot \frac{1}{Q\cdot 2^{t-r}} \cdot \frac{\prod_{p\in S_{K/K^+}} [\Z_p^\times:N(T^{K,\Q}(\Z_p))]}{[\A^\times:N(T^{K,\Q}(\A)) \cdot \Q^\times]}, 
\end{equation}   
where $S_{K/K^+}$ is the set of primes $p$ such that there exists a place $v|p$ of $K^+$ which is ramified in $K$.
\end{prop}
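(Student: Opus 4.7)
The plan is to deduce both formulas from class field theory of the CM-extension $K/K^+$, by studying the norm map $N_{K/K^+}$ and its associated exact sequences of tori. The essential inputs are the Hasse norm theorem for $K/K^+$, the Hasse unit index computation, and local class field theory at the ramified primes.

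For part~(1), I would apply the snake lemma to the short exact sequence of $\Q$-tori
\begin{equation*}
1 \to T^{K,1} \to \Res_{K/\Q}\Gm \xrightarrow{N_{K/K^+}} \Res_{K^+/\Q}\Gm \to 1.
\end{equation*}
Taking $\A_f$- and $\Q$-points and comparing with the maximal open compact subgroups of the adelic groups, the snake lemma yields an exact sequence on double coset spaces that expresses $h_{T^{K,1}}$ as the product of $h_K/h_{K^+}$ with the unit index $[O_{K^+}^\times : N_{K/K^+}(O_K^\times)]$ and the inverse of the Hasse norm obstruction for $K/K^+$. Factor by factor on $K = \prod K_i$, the unit index equals $2^r/Q$ by the standard comparison with $(O_{K^+}^\times)^2$ via the definition of the Hasse unit index $Q_i$ in each factor, while the Hasse norm obstruction has order $2^{t-r}$, arising from local Galois cohomology contributions at the $t$ finite places of $K^+$ ramified in $K$ modulo a global obstruction of order $2^r$. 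Combining these pieces produces~\eqref{eq:smf.23}.

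For part~(2), I would use the exact sequence of $\Q$-tori
\begin{equation*}
1 \to T^{K,1} \to T^{K,\Q} \xrightarrow{N} \Gm \to 1,
\end{equation*}
in which $N = N_{K/K^+}$ lands in $\Gm$ by definition of $T^{K,\Q}$. The snake-lemma computation analogous to part~(1), applied to the square of rational versus adelic points modulo maximal compacts, yields
\begin{equation*}
h_{T^{K,\Q}} \;=\; h_{T^{K,1}} \cdot \frac{\prod_{p}[\Z_p^\times : N(T^{K,\Q}(\Z_p))]}{[\A^\times : \Q^\times \cdot N(T^{K,\Q}(\A))]}.
\end{equation*}
By local class field theory, $N(T^{K,\Q}(\Z_p)) = \Z_p^\times$ whenever every place of $K^+$ above $p$ is unramified in $K$, so the product collapses to one over $p \in S_{K/K^+}$. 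Substituting~\eqref{eq:smf.23} gives~\eqref{eq:smf.24}.

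The main obstacle is the bookkeeping in part~(1): correctly tracking the factors $Q$ and $2^{t-r}$ requires feeding the Tate cohomology of $\Gal(K/K^+) \cong (\Z/2)^r$ on the relevant unit groups into the local-global sequence from class field theory, and checking that the contributions assemble with the right signs after taking alternating products over the CM-factors. Once this is done, part~(2) is a formal consequence of part~(1) together with the snake lemma.
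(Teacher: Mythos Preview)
The paper does not prove this proposition: for part~(1) it simply cites Shyr, reducing the CM-algebra case to the CM-field case via the multiplicativity $h_{T^{K,1}} = \prod_i h_{T^{K_i,1}}$; for part~(2) it cites Guo--Sheu--Yu. Your proposal therefore goes beyond the paper by sketching an actual argument, and the overall architecture---the two short exact sequences of tori combined with local class field theory at the ramified primes---is indeed what underlies those references. Your treatment of part~(2) is correct and matches the argument in the cited paper.

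In part~(1), however, the specific indices you name are wrong, and with them your numbers do not even assemble to~\eqref{eq:smf.23}. The index $[O_{K^+}^\times : N_{K/K^+}(O_K^\times)]$ is not $2^r/Q$: for a single CM factor with $Q_i = 1$ one has $O_{K_i}^\times = \mu_{K_i}\cdot O_{K_i^+}^\times$, hence $N(O_{K_i}^\times) = (O_{K_i^+}^\times)^2$, so this index is $2^{[K_i^+:\Q]}$, not $2$. And since each $K_i/K_i^+$ is cyclic, the Hasse norm principle holds and the global obstruction you invoke is trivial, so the factor $2^{t-r}$ does not arise as a ``Hasse norm obstruction'' either. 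In Shyr's derivation the pieces come instead from Ono's Tamagawa-number formalism for tori: the Tamagawa number of $T^{K,1}$ equals $2^r$ (computed as $|H^1(\Q,\hat T^{K,1})|$, the Tate--Shafarevich group being trivial by the Hasse norm theorem), the $t$ finite ramified places each contribute a local factor $[O_{K^+_w}^\times : N(O_{K_w}^\times)] = 2$, and $Q$ enters through $T^{K,1}(\Z) = \mu_K$ and the comparison of unit-lattice covolumes across the exact sequence. So your structural picture is right, but a bare snake lemma with the indices you wrote will not close; you need Ono's class-number relation for tori, as in Shyr's paper, to organize the factors correctly.
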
  

The formula \eqref{eq:smf.23} is proved when $K$ is a CM field; see
\cite[(16), p.~375 ]{shyr:michigan1977}, When $K$ is a CM algebra, the
formula then follows from $h_{T^{K,1}}=\prod_{i=1}^r h_{T^{K_i,1}}$
and loc.~cit. For the formula \eqref{eq:smf.24}, see  \cite[Theorem 1.1]{guo-sheu-yu:CM}. 


By Proposition~\ref{smf.9} and \eqref{eq:smf.22},
we obtain the following result.

\begin{thm}\label{smf.10}
Let $\ul x_0=(A_0,\lambda_0)$ be a principally polarized principal 
abelian variety over $\Fq$ with endomorphism algebra
$\End^0(A_0)=K=\prod_{i=1}^r K_i$. Then the genus $\Lambda_{\ul x_0}$
of $\ul x_0$ has  
\[ \frac{h_K}{h_{K^+}}\cdot \frac{1}{Q\cdot 2^{t-r}} \]   
isomorphism classes. The idealcomplex $\calI_{\ul x_0}$
of $\ul x_0$ has 
\[ \frac{h_K}{h_{K^+}}\cdot \frac{1}{Q\cdot 2^{t-r}} \cdot \frac{\prod_{p\in S_{K/K^+}} [\Z_p^\times:N(T^{K,\Q}(\Z_p))]}{[\A^\times:N(T^{K,\Q}(\A)) \cdot \Q^\times]} \]
similitude classes. 
\end{thm}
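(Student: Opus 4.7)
The plan is essentially to assemble pieces that are already in place, so the proof is really a matter of verifying that everything aligns correctly. First I would verify the identification of the relevant group schemes with the tori $T^{K,1}$ and $T^{K,\Q}$. Since $A_0$ is principal, its endomorphism ring is the maximal order $O_K$ in the CM algebra $K$, and the Rosati involution $'$ induced by $\lambda_0$ coincides with the canonical CM involution. Reading off Definition~\ref{smf.5}(3) directly, the functors of points
\[ R \mapsto \{x \in (O_K\otimes R)^\times \mid xx' \in R^\times\}, \quad R \mapsto \{x \in (O_K\otimes R)^\times \mid xx'=1\} \]
are precisely the definitions of $T^{K,\Q}$ and $T^{K,1}$ used in the statement of Proposition~\ref{smf.9}, so $GU_{\ul x_0}=T^{K,\Q}$ and $G_{\ul x_0}=T^{K,1}$ as group schemes over $\Z$.

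Next I would invoke Theorem~\ref{smf.7}, whose hypothesis (a) holds since $\F_q$ is finitely generated, to obtain the bijections \eqref{eq:smf.21}. Because both $T^{K,\Q}(\wh \Z)$ and $T^{K,1}(\wh \Z)$ are maximal open compact subgroups of their respective adelic groups (as $O_K$ is the maximal order and thus yields the integral models giving the maximal compact subgroup at every finite place), the double coset spaces appearing on the right of \eqref{eq:smf.21} compute exactly the class numbers $h_{T^{K,1}}$ and $h_{T^{K,\Q}}$ in the sense used in Proposition~\ref{smf.9}. This gives the equalities
\[ |\Lambda_{\ul x_0}| = h_{T^{K,1}}, \qquad |\calI_{\ul x_0}| = h_{T^{K,\Q}}, \]
which is \eqref{eq:smf.22}.

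Finally I would substitute the formulas \eqref{eq:smf.23} and \eqref{eq:smf.24} from Proposition~\ref{smf.9} to conclude. For the CM-field case ($r=1$) these are the classical formulas of Shyr and of Guo--Sheu--Yu cited in the text; the CM-algebra case reduces to the CM-field case via the product decomposition $T^{K,1}=\prod_i T^{K_i,1}$ and $T^{K,\Q}=\prod_i T^{K_i,\Q}$, which multiplies class numbers and factors the local indices componentwise.

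There is really no substantive obstacle here: the theorem is a packaging of Theorem~\ref{smf.7} together with Proposition~\ref{smf.9}. The only point requiring a moment of care is the verification that $\End_{O_B}(A_0)$ reduces to $O_K$ in the current setting (where $O_B=\Z$) so that the structure of the automorphism group schemes is exactly that of the norm and norm-one tori, and the matching of the compact subgroups $G_{\ul x_0}(\wh\Z)$, $GU_{\ul x_0}(\wh\Z)$ with the maximal compacts $U_{T^{K,1}}$, $U_{T^{K,\Q}}$ used implicitly in the definitions of $h_{T^{K,1}}$ and $h_{T^{K,\Q}}$. Once these identifications are in hand, the two displayed formulas in the theorem follow by direct substitution.
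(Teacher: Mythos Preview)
Your proposal is correct and follows essentially the same route as the paper: identify $G_{\ul x_0}=T^{K,1}$ and $GU_{\ul x_0}=T^{K,\Q}$, apply Theorem~\ref{smf.7} (case (a)) to get \eqref{eq:smf.21}, recognize the double cosets as the class numbers $h_{T^{K,1}}$ and $h_{T^{K,\Q}}$, and substitute Proposition~\ref{smf.9}. One small correction to your final aside: the decomposition $T^{K,\Q}=\prod_i T^{K_i,\Q}$ is not correct, since the similitude factor $xx'$ is required to lie in the diagonal $\Gm$, not in $\Gm^r$; the paper only uses the product decomposition for $T^{K,1}$ in deducing \eqref{eq:smf.23}, and cites \cite{guo-sheu-yu:CM} directly for \eqref{eq:smf.24}. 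This does not affect your main argument, which simply invokes Proposition~\ref{smf.9} as stated.
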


\section*{Acknowledgments}
J.~Xue is partially supported by the 1000-plan program for young
talents and 
Natural Science Foundation grant \#11601395 of PRC. 
C.-F. Yu is partially supported by the MoST grants 
104-2115-M-001-001MY3 and 107-2115-M-001-001-MY2.

\bibliographystyle{hplain}
\bibliography{TeXBiB}
\end{document}